\theoremstyle{plain}
\newtheorem{thm}{Theorem}[section]
\newtheorem*{thm*}{Theorem}
\newtheorem*{cor*}{Corollary}
\newtheorem{prop}[thm]{Proposition}
\newtheorem{lem}[thm]{Lemma}
\newtheorem{cor}[thm]{Corollary}
\newtheorem{claim}{Claim}
\newtheorem*{claim*}{Claim}
\theoremstyle{definition}
\newtheorem{defn}[thm]{Definition}
\newtheorem{ex}[thm]{Example}
\newtheorem{rem}[thm]{Remark}
\theoremstyle{remark}
\numberwithin{equation}{thm}
\def\Im{\mathrm{Im}}
\def\rank{\mathrm{rank}}
\def\a{\mathfrak a}
\def\e{\mathrm{e}}
\def\m{\mathfrak m}
\def\p{\mathfrak p}
\def\Z{\Bbb Z}
\def\H{\mathrm{H}}
\newcommand{\rma}{\mathrm{a}}
\newcommand{\rmG}{\mathrm{G}}
\newcommand{\ov}{\overline}
\def\type{\mathrm{type}}
\def\depth{\mathrm{depth}}
\def\Ann{\mathrm{Ann}}
\def\Ass{\mathrm{Ass}}
\def\height{\mathrm{ht}}
\def\R{{\mathcal R}}
\newcommand{\mapright}[1]{%
\smash{\mathop{%
\hbox to 1cm{\rightarrowfill}}\limits^{#1}}}
\newcommand{\mapleft}[1]{%
\smash{\mathop{%
\hbox to 1cm{\leftarrowfill}}\limits_{#1}}}
\newcommand{\mapdown}[1]{\Big\downarrow
\llap{$\vcenter{\hbox{$\scriptstyle#1\,$}}$ }}
\begin{document}

\setlength{\baselineskip}{12.2pt}
%%%%%%%%%%%%%%%%%%%%%%%%%%%%%%%%%%%%%%%%%%%%%%%%%%%%%%%%%%%%%
%%%%%%%%%%%%%%%%%%%%%%%%%%%%%%%%%%%%%%%%%%%%%%%%%%
\title[Sally module and the second normal Hilbert coefficient]{On the structure of the Sally module and the second normal Hilbert coefficient}
 
\pagestyle{plain}
\author[Masuti]{Shreedevi K. Masuti}
\address{Chennai Mathematical Institute, Siruseri, Tamilnadu 603103. India}
\email{shreedevikm@cmi.ac.in}
\author[Ozeki]{Kazuho Ozeki}
\address{Department of Mathematical Sciences, Faculty of Science, Yamaguchi University, 1677-1 Yoshida, Yamaguchi 753-8512, Japan}
\email{ozeki@yamaguchi-u.ac.jp}
\author[Rossi]{Maria Evelina Rossi}
\address{Dipartimento di Matematica, Universit{\`a} di Genova, Via Dodecaneso 35, 16146 Genova, Italy}
\email{rossim@dima.unige.it}
\author[Truong]{Hoang Le Truong}
\address{Institute of Mathematics, Vietnam Academy of Science and Technology, 18 Hoang Quoc Viet Road, 10307 Hanoi, Vietnam }
\address{Mathematik und Informatik, Universit\"{a}t des Saarlandes, Campus E2 4,D-66123 Saarbr\"{u}cken, Germany}
\email{hltruong@math.ac.vn\\
	hoang@math.uni-sb.de}

%\date{\today}
\thanks{SKM is supported by INSPIRE faculty award funded by Department of Science and Technology, Govt. of India. She is also partially supported by a grant from Infosys Foundation. She was supported by INdAM COFUND Fellowships cofunded by Marie Curie actions, Italy, for her research in Genova during which the work has started. KO was partially supported by Grant-in-Aid for Scientific Researches (C) in Japan (18K03241). MER was partially supported by PRIN 2015EYPTSB-008 Geometry of Algebraic Varieties.
The fourth author was partially supported by  the Vietnam Institute for Advanced Study in Mathematics and the Vietnam National Foundation for Science and Technology Development (NAFOSTED) under grant number 101.04-2017.14.
}
\keywords{Cohen-Macaulay local ring, associated graded ring, normal Hilbert coefficients, Sally Module, Rees algebra}
\subjclass[2010]{13D40, 13A30, 13H10}
\maketitle
%%%%%%%%%%%%%%%%%%%%%%%%%%%%%%%%%%%%%%%%%%%%%%%%%%%%%%%%%%%%%
%%%%%%%%%%%%%%%%%%%%%%%%%%%%%%%%%%%%%%%%%%%%%%%%%%%%%%%%%%%%%

\begin{abstract}
The Hilbert coefficients of the normal filtration  give important geometric information on the base ring like the pseudo-rationality. The Sally module was introduced by W.V. Vasconcelos and it is useful to connect the Hilbert coefficients to the homological properties of the  associated graded module of a Noetherian filtration. In this paper we give a complete structure of the Sally module in the case the second normal Hilbert coefficient attains almost minimal value in an analytically unramified Cohen-Macaulay local ring. As a consequence, in this case we present a complete description of the Hilbert function of the associated  graded ring of the normal filtration.  A deep analysis of the vanishing of the third Hilbert coefficient has been necessary. This study is  related to a long-standing conjecture stated by S.~Itoh.  
% bound on $\ov{\e}_2(I) - \ell_R(\ov{I^2}/ J \ov{I})$ given in \cite{KM15} in the case $\ov{\e}_3(I)=0.$ 

% In this paper we give a characterization of the almost minimal value of the second normal Hilbert coefficient in an analytically unramified Cohen-Macaulay local ring.
%The Sally module of an ideal is an important tool to interplay between Hilbert coefficients and the properties of the associated graded ring. In this paper we give new insights on the structure of the Sally module which characterizes the almost minimal value of the first Hilbert coefficient in the case of the integrally closure filtration in an analytically unramified Cohen-Macaulay local ring $R. $ %The first normal Hilbert coefficient of a primary ideal plays an important role in commutative algebra and in algebraic geometry. In this paper we give a complete algebraic structure of the normal associated graded ring of $\m$-primary ideals $I$ in an analytically unramified Cohen-Macaulay local ring $R$ satisfying the equality $\ov{\e}_1(I)=\ov{\e}_0(I)-\ell_R(R/\ov{I})+\ell_R(\ov{I^2}/J\ov{I})+1, $ where $J$ is a minimal reduction of $I$, and $\ov{\e}_0(I)$ and $\ov{\e}_1(I)$ denote the multiplicity and the first normal Hilbert coefficient of $I,$ respectively. 
\end{abstract}

{\footnotesize
% \tableofcontents
}

%%%%%%%%%%%%%%%%%%%%%%%%%%%%%%%%%%%%%%%%%%%%%%%%%%%%%%%%%%%%%
%%%%%%%%%%%%%%%%%%%%%%%%%%%%%%%%%%%%%%%%%%%%%%%%%%%%%%%%%%%%%
%%%%%%%%%%%%%%%%%%%%%%%%%%%%%%%%%%%%%%%%%%%%%%%%%%%%%%%%%%%%%
%%%%%%%%%%%%%%%%%%%%%%%%%%%%%%%%%%%%%%%%%%%%%%%%%%%%%%%%%%%%%
%%%%%%%%%%%%%%%%%%%%%%%%%%%%%%%%%%%%%%%%%%%%%%%%%%%%%%%%%%%%%
%%%%%%%%%%%%%%%%%%%%%%%%%%%%%%%%%%%%%%%%%%%%%%%%%%%%%%%%%%%%%

\section{Introduction}
The study of  the homological properties of the blow-up algebras of an ideal in a Noetherian local ring is an important problem in commutative algebra and in algebraic geometry. The problem is difficult and it  is one of the main obstacles in the resolution of singularities. As a remedy one tries to find available information   from  the Hilbert polynomial with respect to suitable filtrations.  The filtration  of the integral closure of the powers of an ideal gives rise to the normal Hilbert polynomial, first investigated by D. Rees in his study of pseudo-rational local rings. This study was carried on by J. Lipman, B. Teissier and more recently by   T. Okuma, K.-i. Watanabe, and K.Yoshida with the study of $p_g$-ideals  which inherit  nice properties of integrally closed ideals on rational singularities, see \cite{OWY17, OWY19}.
%Often if these invariants achieve extremal values with respect to some bounds, then the blow-up algebra of the corresponding ideal has good homological properties. This is a reason for which there is a huge interest in the literature in finding   sharp bounds on these numerical invariants, see for instance \cite{RV10} and references therein.   
%\vskip 2mm    The main goal of this paper is to explore the second Hilbert coefficient associated to this filtration.  
\vskip 2mm
Let $(R,\m)$ be an analytically unramified Cohen-Macaulay local ring of dimension $d>0$ with infinite residue field $R/\m$ and $I$ an $\m$-primary ideal of $R.$ Let $\overline{I}$ denote the integral closure of $I.$ By \cite{Ree61}  it is well known that if we consider  the normal filtration $\{\ov{I^n}\}_{n \in \mathbb{Z}},$  there exist integers $\ov{\e}_i(I)$, called the {\it normal Hilbert coefficients} of $I$ such that
\begin{eqnarray*}
 \ell_R(R/\ov{I^{n+1}})=\ov{\e}_0(I) \binom{n+d}{d}-\ov{\e}_1(I) \binom{n+d-1}{d-1}+\cdots+(-1)^d \ov{\e}_d(I)
\end{eqnarray*}
for $n \gg 0$.
Here $\ell_R(N)$ denotes, for an $R$-module $N$, the length of $N$.  
A large literature was devoted to the study of the integer $\ov{g}_s(I):=\ov{\e}_1(I)-\ov{\e}_0(I)+\ell_R(R/\ov{I}), $  called by Ooishi   the normal sectional genus of $I$ (see \cite{O87}). 

It is well known that $\ov{g}_s(I) \ge 0$ and if the equality holds, then  the normal associated graded ring $\ov{G}(I):=\oplus_{n \geq 0} \ov{I^n}/\ov{I^{n+1}}$ of $I$ is Cohen-Macaulay and $\ov{I^{n+1}}=J\ov{I^n}$ for all  $n\ge 1 $  where $J$ is any minimal reduction of $I $ $($\cite{  H87, Itoh92, N60, O87}$)$. Notice that to prove the last equality it is not enough to prove $\ov{I^{2}}=J\ov{I}$ as for the $I$-adic filtration.   Okuma produced an interesting geometrical  example  with   $\ov{I^{2}}=J\ov{I },$ but  $\ov{I^{3}}\neq J\ov{I^2}$  (private communication).  This makes it clear how difficult can be  in general  to get information on the  reduction number $\ov{\rm{r}}_J(I):=\min \{r \geq 0 \ | \ \mbox{$\ov{I^{n+1}}=J\ov{I^n}$ for all $n \geq r$} \}, $ an important numerical invariant of $I.$ Recently   A. Corso, C. Polini and M. E. Rossi showed that if   $\ov{g}_s(I)=1$ holds true, then $\depth ~\ov{G}(I) \geq d-1$   (see {CPR16}).   
This case was also explored by T. T. Phuong \cite{Phu18} when $R$ is a generalized Cohen-Macaulay ring. 

  The normal sectional genus is strictly related to the second normal Hilbert coefficient. 
%When $d=2,$ then $\ov{\e}_2(I)$ is called the normal genus of $I $  and Narita proved that $\ov{\e}_2(I)$ is always non-negative.    \vskip 2mm
By \cite[Theorem 2]{Itoh92}  the following inequalities
$$ \ov{\e}_2(I) \geq \ov{\e}_1(I) -\ov{\e}_0(I)+\ell_R(R/\ov{I})  {=\ov{g}_s(I)} \geq \ell_R(\ov{I^2}/J\ov{I})  $$
hold true and if either of the inequalities is an equality, then $\ov{\rm{r}}_J(I) \le 2,  $  in particular $\ov{G}(I) $ is Cohen-Macaulay (see also \cite[Theorem 3.11]{CPR05}). The vanishing of $ \ov{\e}_2(I)  $ is a particular case of this situation. In an excellent normal domain of dimension two, $ \ov{\e}_2(I)=0  $ characterizes the $p_g$-ideals, see \cite{OWY17}.   It is interesting to notice that  if $R$ is an excellent normal local domain of dimension two, then 
$R$ has a rational singularity (resp. minimally elliptic singularity) if and only if $\ov{\e}_2(I)=0$ for every $\m$-primary ideal $I$ in $R$ (resp. $R$ is Gorenstein and $\max\{\ov{\e}_2(I): I \mbox{ is $\m$-primary}\}=1$), see \cite{OWY17}.
%In \cite[Theorem 3.1]{MOR17} the authors explored the equality $\ov{\e}_1(I) -\ov{\e}_0(I)+\ell_R(R/\ov{I})  =  \ell_R(\ov{I^2}/J\ov{I})+1.$  In this case they obtained the structure of the Sally module which, in particular, implies that $\depth~\ov{G}(I) \geq d-1$.  % that is $\ov{G}(I)$ is almost Cohen-Macaulay. 
%The corresponding equality for integrally closed ideals was studied in \cite{OR16}.

In this paper we present the structure of the Sally module in the case the second normal coefficient  is almost minimal, that is  $ \ov{\e}_2(I) = \ov{\e}_1(I) -\ov{\e}_0(I)+\ell_R(R/\ov{I})+1.$  If in addition $\ov{\e}_3(I) \neq 0,$ we prove  that     $\depth~\ov{G}(I) \geq d-1$ and  $\ov{\rm{r}}_J(I)=3,  $ see Theorem \ref{maintheorem1}.  Example \ref{example} shows that  the result is sharp.  If $ \ov{\e}_3(I) = 0 $ and $d=3,$ we prove that $\ov{G}(I^{\ell}) $ is Cohen-Macaulay for all  $\ell \ge 2$ (see Theorem \ref{d=3}). Actually, if  $R$ is Gorenstein, S. Itoh in \cite{Itoh92} conjectured that if $ \ov{\e}_3(I) = 0,  $ then $\ov{G}(I )$ is Cohen-Macaulay. Recently the conjecture was studied by several authors, see for instance \cite{CPR16} and  \cite{KM15}, but as far as we know it is still open.  In this paper a deep analysis of the case $  \ov{\e}_3(I)=0$ has been presented, see Theorem \ref{d=3}.  Our hope is that these results will be successfully applied to give 
new insights to prove (or disprove) the long-standing conjecture by Itoh.  An interesting analysis on the vanishing of  the higher normal coefficients is presented in \cite{GMV19}.

The main tools that we use in this paper are the study of the vanishing of local cohomology modules of the normalized Rees algebra, see Theorem \ref{Itoh2} and Lemma \ref{cohomology} and   the Sally module introduced by  W. V. Vasconcelos in \cite{V94}.  In particular we study the structure of  a suitable filtration of the Sally module considered by M. Vaz Pinto in \cite{VP96}, see Definition \ref{VP} and  Proposition \ref{C_2}, useful for getting information on the reduction number. 

 % An interesting analysis on the higher normal coefficients is presented in \cite{GMV19}.
%We remark that it is very rare to obtain the exact value of reduction number even if the Sally module is Cohen-Macaulay. In this sense our Theorem \ref{main} is more rigid and is not a straightforward generalization of \cite[Theorem 3.1]{MOR17}.
\vskip 2mm
\vskip 2mm
% It is a mystery that the normal Hilbert coefficients and $\ov{G}(I)$ have very nice properties as compared to the Hilbert coefficients of an ideal. Our result continues to support this enigma.

% Thus the ideals $I$ with $\overline{\e}_1(I)=\ov{\e}_0(I)-\ell_R(R/\ov{I})+\ell_R(\ov{I^2}/J \ov{I})+1$ enjoy nice properties and it seems natural to ask when the equality  $\ov{\e}_2(I) = \ov{\e}_1(I) -\ov{\e}_0(I)+\ell_R(R/\ov{I})+1$ holds true. 
% Then the main result of this paper is stated as follows. 

%The main tool that we use in this paper is an analogue of the Sally module introduced by W. Vasconcelos  \cite{V94} for integral closure filtration and a filtration of the Sally module introduced by M. Vaz Pinto in \cite{VP96}. In \cite{CPR16} authors used the normal Sally module to study the equality $\bar{\e}_1(I)=\bar{\e}_0(I)-\ell_R(R/\ov{I})+1.$ In this paper we use $C^{(2)}$ (see Section 2). 
%Hence we develop some basics and some important results on $C^{(2)},$  some of them are stated in a general setting. Our hope is that these will be successfully applied to give new insight to problems related to the normal Hilbert coefficients. We prove our main theorem (Theorem \ref{main}) in Section 3. In Section 4 we derive few consequences of Theorem \ref{main}. 
%In particular we recover the result of A. Corso, C. Polini and M. E. Rossi \cite[Theorem 2.5]{CPR16}. 

%\vskip 3mm
\section*{Acknowledgements}
We thank Manoj Kummini and Claudia Polini for helpful conversations throughout the preparation of the manuscript. 

\section{Notation and Preliminaries}
In \cite{MOR17} the first three authors   proved useful  properties of the Sally module associated to any $I$-admissible filtration that  we will need to prove our main result. In this section we recall part of these results. %summarize the basic properties  which we will be needed to prove our main result. 
We refer to \cite{OR16} and to \cite{MOR17} for  details.

Throughout this paper, let $(R,\m)$ be a Cohen-Macaulay local ring with infinite residue field and $I$ an $\m$-primary ideal in $R.$ 
Recall that a {\it a filtration} of ideals $\mathcal{I}:=\{I_n\}_{n \in \mathbb{Z}}$ is a chain of ideals of $R$ such that $R=I_0$ and $I_n \supseteq I_{n+1}$ for all $n \in \mathbb{Z}$.
We say that a filtration $\mathcal{I}$ is {\it $I$-admissible} if for all $m,n \in \mathbb{Z},$ $I_m \cdot I_n \subseteq I_{m+n}, ~I^n \subseteq I_n$ and there exists $k \in \mathbb{N}$ such that $I_{n} \subseteq I^{n-k}$ for all $n \in \mathbb{Z}.$ If $R$ is analytically unramified, then $\{\ov{I^n}\}_{n \in \mathbb{Z}}$ is an $I$-admissible filtration. In fact, by a classical result of Rees \cite{Ree61}, $R$ is analytically unramified if and only if the  filtration $\{\ov{I^n}\}_{n \in Z}$ is $I$-admissible for some (equiv. for all) $\m$-primary ideal $I$ in $R.$

\vskip 2mm

For an $I$-admissible filtration $\mathcal I=\{I_n\}_{n \in \mathbb{Z}},$ let 
$$
\R(\mathcal I)=\oplus_{n\geq 0}I_nt^n \subseteq R[t], \ \ \ \R'(\mathcal I)=\oplus_{n \in \mathbb{Z}}I_nt^n \subseteq R[t,t^{-1}], \ \ \mbox{and} \ \ G(\mathcal I)=\R'(\mathcal I)/t^{-1}\R'(\mathcal I) 
$$
denote, respectively, the Rees algebra, the extended Rees algebra, and the associated graded ring of $\mathcal{I}$ where $t$ is an indeterminate over $R$.  
If $\mathcal{I}=\{I^n\}_{n\in \mathbb{Z}}$ (resp. $\{\ov{I^n}\}_{n \in \mathbb{Z},}$), we write $\mathcal{R}(I),~\mathcal{R'}(I)$ and $G(I)$ (resp. $\ov{\mathcal{R}}(I),~\ov{\mathcal{R'}}(I)$ and $\ov{G}(I)$)
%We set
%$$\ov{\mathcal{R}}(I) := \oplus_{n \geq 0}\ov{I^n}t^n  \ \subseteq R[t], \ \ ~~~~\ov{\mathcal{R'}}(I) := \oplus_{n \in \mathbb{Z}}\ov{I^n}t^n \ \subseteq R[t,t^{-1}], \ \
%\operatorname{ and } 
%\ov{G}(I):= \ov{\mathcal{R'}}(I)/t^{-1}\ov{\mathcal{R'}}(I) 
% \cong \bigoplus_{n \geq 0}\ov{I^n}/\ov{I^{n+1}}
%$$
for the the Rees algebra, the extended Rees algebra, and the associated graded ring of $\mathcal{I},$  respectively. 
%for the integral closure of $\mathcal{R}(I):=\oplus_{ n \geq 0} I^nt^n$ and $\mathcal{R'}(I):=\oplus_{ n \geq 0} I^nt^n$ in $R[t]$ and $R[t,t^{-1}]$, and the associated graded ring of $\{\ov{I^n}\}_{n \in \Z}$ respectively.
We set 
$ T=\mathcal{R}(J)=\mathcal{R}(\{J^n\}_{n \in \mathbb{Z}})$ where $J$ is a minimal reduction of $I.$
%and $M=\m T+T_+$ denotes the graded maximal ideal of $T$.
Then $\mathcal{R}(\mathcal{I})$ is a module finite extension of $T. $ Hence  $ \ell_R(R/{I_{n+1}})$ for large  $n$ is a polynomial and we denote  by 
  ${\e}_i(\mathcal I)$ the {\it   Hilbert coefficients} of ${\mathcal I}. $  
\vskip 2mm

Following Vasconcelos \cite{V94}, we consider 
$$
S_J(\mathcal{I}):= \frac{\mathcal{R}(\mathcal{I})_{\geq 1} t^{-1}}{I_1 T}\cong \oplus_{n \geq 1}I_{n+1}/J^nI_1
$$ 
the {\it Sally module} of $\mathcal{I}$ with respect to $J$. Notice that $S_J(\mathcal{I})$ is a finite $T$-module. In \cite{VP96} Pinto introduced a filtration of the Sally module in the case $\mathcal{I}=\{I^n\}_{n \in \Z}$.  She constructed the following graded modules to decompose the structure of Sally modules.

\begin{defn}\label{VP}
For each $\ell \geq 1$, consider the graded $T$-module
$$
C^{(\ell)}:=C^{(\ell)}_J(\mathcal{I})= \frac{\mathcal{R}(\mathcal{I})_{\geq \ell} t^{-1}}{I_{\ell} Tt^{\ell-1}} \cong \oplus_{n \geq \ell} {I_{n+1}}/J^{n-\ell+1}I_{\ell}. 
$$
Let $L^{(\ell)}:=L^{(\ell)}_J(\mathcal{I})= [C^{(\ell)}]_{\ell} T$ be the $T$-submodule of $C^{(\ell)}$.
Then 
$L^{(\ell)} \cong \bigoplus_{n \geq \ell} J^{n-\ell}I_{\ell+1}/J^{n-\ell+1}I_{\ell}$. 
% and $C^{(\ell)}/L^{(\ell)} \cong C^{(\ell+1)}$ as graded $T$-modules, 
Hence for every $\ell \ge 1$  we have the following natural exact sequence of graded $T$-modules
$$ 0 \to L^{(\ell)} \to C^{(\ell)} \to C^{(\ell+1)} \to 0. $$
\end{defn}
%Throughout this section we set 
%\[
% C^{(\ell)}:=C^{(\ell)}_J(\mathcal{I}) \mbox{ and }~L^{(\ell)}:=L^{(\ell)}_J(\mathcal{I}) 
%\]
%unless otherwise specified. 
Notice that $C^{(1)}=S,$ and since $\mathcal{R}(\mathcal I)$ is a finite graded $T$-module, $C^{(\ell)}$ and $L^{(\ell)}$ are finitely generated graded $T$-modules for every $\ell \geq 1.$

\vskip 2mm

In this paper, the structure of the graded module $C^{(2)}$ plays a fundamental role.
The following lemma was proved for the $I$-adic filtration in \cite[Lemma 2.1]{OR16}, but the same proof works for any $I$-admissible filtration. 

\begin{lem}\label{fact}
With the notations as above we have
\begin{itemize}
\item[$(1)$] $\m C^{(2)}=(0)$ if and only if $\m I_{n+1} \subseteq J^{n-1}I_2$ for all $n \geq 2$.
\item[$(2)$] $C^{(2)}=(0)$ if and only if $I_{n+1}=JI_n$ for all $n \geq 2$, and 
\item[$(3)$] $C^{(2)}=[C^{(2)}]_2 T$ if and only if $I_{n+1}=JI_n$ for all $n \geq 3$.
\end{itemize}
\end{lem}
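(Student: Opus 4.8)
The plan is to reduce each equivalence to a statement about the homogeneous components of $C^{(2)}$, using that $T=\mathcal{R}(J)$ is standard graded over $R=[T]_0$ and is generated in degree one by $Jt$. Throughout I would use the explicit description $[C^{(2)}]_n\cong I_{n+1}/J^{n-1}I_2$ for $n\ge 2$ coming from Definition \ref{VP}, together with the admissibility relation $JI_n\subseteq I_1I_n\subseteq I_{n+1}$.

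For (1), note that $\m\subseteq R=[T]_0$ acts on each graded piece separately, so $\m C^{(2)}=(0)$ holds if and only if $\m[C^{(2)}]_n=(0)$ for every $n\ge 2$; since $[C^{(2)}]_n\cong I_{n+1}/J^{n-1}I_2$, this is precisely the condition $\m I_{n+1}\subseteq J^{n-1}I_2$ for all $n\ge 2$. This direction is immediate and needs no induction.

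For (2), the vanishing $C^{(2)}=(0)$ amounts to $I_{n+1}=J^{n-1}I_2$ for all $n\ge 2$. I would then show by a two-way induction that this family of equalities is equivalent to $I_{n+1}=JI_n$ for all $n\ge 2$: iterating $I_{n+1}=JI_n$ gives $I_{n+1}=J^{n-1}I_2$, while conversely $JI_n=J\cdot J^{n-2}I_2=J^{n-1}I_2=I_{n+1}$, reading off the equalities at the indices $n$ and $n-1$.

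The step I expect to carry the real content is (3), where the submodule $[C^{(2)}]_2T$ must be identified degreewise. Here I would invoke that $[C^{(2)}]_2T=L^{(2)}$ in the notation of Definition \ref{VP}, whose degree-$n$ component is $J^{n-2}I_3/J^{n-1}I_2$; the point making this a genuine submodule of $[C^{(2)}]_n=I_{n+1}/J^{n-1}I_2$ is the containment $J^{n-1}I_2=J^{n-2}(JI_2)\subseteq J^{n-2}I_3$, which again uses $JI_2\subseteq I_3$. Consequently $C^{(2)}=[C^{(2)}]_2T$ if and only if $I_{n+1}=J^{n-2}I_3$ for all $n\ge 3$ (the case $n=2$ being automatic), and the same two-way induction as in (2) turns this into $I_{n+1}=JI_n$ for all $n\ge 3$. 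The only delicate points are the bookkeeping of the $t^{-1}$ degree shift in the definitions of $C^{(2)}$ and $[C^{(2)}]_2T$, and the collapsing containment $J^{n-1}I_2\subseteq J^{n-2}I_3$; once these are in place the arguments are routine inductions.
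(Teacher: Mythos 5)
Your argument is correct, and it is essentially the standard degreewise identification $[C^{(2)}]_n\cong I_{n+1}/J^{n-1}I_2$, $[L^{(2)}]_n=J^{n-2}I_3/J^{n-1}I_2$ together with the telescoping induction between $I_{n+1}=J^{n-1}I_2$ (resp.\ $J^{n-2}I_3$) and $I_{n+1}=JI_n$; the paper itself omits the proof and cites \cite[Lemma 2.1]{OR16}, whose argument is exactly this one, so there is nothing to add.
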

 
In the following proposition we need to assume  $J \cap I_2=JI_1  $  where $J$ is a minimal reduction of $\mathcal{I}. $
This condition is automatically satisfied if $\mathcal{I}=\{\m^n\}_{n \in \Z}$ or if $\mathcal{I}=\{\overline{I^n}\}_{n \in \mathbb{Z}}$ (see \cite{H87, Itoh88}).
%We also notice that, under the condition $J \cap I_2=JI_1$, $\ell_R(I_2/JI_1)=\e_0(\mathcal{I})+(d-1)\ell_R(R/I_1)-\ell_R(I_1/I_2)$
%holds true (see for instance \cite{RV10}), so that  $\ell_R(I_2/JI_1)$ does not depend on a minimal reduction $J$ of $\mathcal{I}$. 
%We remark that the following proposition was proved in \cite[Propositions 2.2, 2.8, and 2.9]{OR16} in the case $\mathcal{I}=\{I^n\}_{n \in \Z}.$ 
% For the sake of completeness we give a proof for any $I$-admissible filtration.
We set $\mathrm{HS}_{G(\mathcal{I})}(z)$ and $\mathrm{HS}_{C^{(2)}}(z)$ denote the Hilbert series of $G(\mathcal I)$ and $C^{(2)}$ respectively.

\begin{prop}$($\cite[Proposition 2.4]{MOR17}$)$\label{C_2}  
Let $\p=\m T$ and suppose that $J \cap I_2=JI_1$. Then the following assertions hold true.
\begin{enumerate}
\item \label{C_2(1)} $\Ass_T C^{(2)} \subseteq \{\p\}$. Hence $\dim_TC^{(2)}=d$, if $C^{(2)} \neq (0)$.
\item \label{C_2(2)} For all $n \geq 0,$
\begin{eqnarray*}
\ell_R(R/I_{n+1})&=&\e_0(\mathcal{I})\binom{n+d}{d}-\{\e_0(\mathcal{I})-\ell_R(R/I_1)+\ell_R(I_2/JI_1)\}\binom{n+d-1}{d-1}\\
&& + \ell_R(I_2/JI_1)\binom{n+d-2}{d-2}-\ell_R([C^{(2)}]_n).
\end{eqnarray*}
\item \label{C_2(3)} $\e_1(\mathcal{I})=\e_0(\mathcal{I})-\ell_R(R/I_1)+\ell_R(I_2/JI_1)+\ell_{T_{\p}}(C^{(2)}_{\p}).$
\item  \label{C_2(4)} $\mathrm{HS}_{G(\mathcal{I})}(z)=\frac{\ell_R(R/I_1)+(\e_0(\mathcal{I})-\ell_R(R/I_1)-\ell_R(I_2/JI_1))z+\ell_R(I_2/JI_1)z^2}{(1-z)^d}-(1-z) \mathrm{HS}_{C^{(2)}}(z)$, 
\item \label{C_2(5)} Suppose $C^{(2)} \neq (0)$. 
Then $\depth ~G(\mathcal{I}) =\depth_TC^{(2)}-1$, if $\depth_TC^{(2)} < d$. Moreover, $\depth~ G(\mathcal{I}) \geq d-1$ if and only if $C^{(2)}$ is a Cohen-Macaulay $T$-module.
\end{enumerate}
\end{prop}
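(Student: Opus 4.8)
The plan is to prove (1) first, since (2)--(4) are then formal consequences of a single length computation and (5) follows from the defining short exact sequence of Definition \ref{VP} together with the standard Rees-algebra sequences. For (1), I would begin with the observation that $I$ is $\m$-primary, so for every prime $\q \neq \m$ and all $n,m$ one has $I_n R_\q = R_\q$ and $J^m R_\q = R_\q$ (using $I^n \subseteq I_n$ from admissibility). Hence each graded piece $[C^{(2)}]_n = I_{n+1}/J^{n-1}I_2$ localizes to $0$ away from $\m$, so $\Supp_R C^{(2)} \subseteq \{\m\}$ and every $[C^{(2)}]_n$ has finite length. Consequently any $P = \Ann_T(\xi) \in \Ass_T C^{(2)}$ satisfies $P \cap R = \Ann_R(\xi) = \m$, i.e. $P \supseteq \p = \m T$.

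The substantive point is to exclude primes strictly larger than $\p$. For this I would use the presentation $0 \to I_2 T t \to \mathcal{R}(\mathcal{I})_{\geq 2}t^{-1} \to C^{(2)} \to 0$, in which the two left-hand modules are torsion-free $T$-modules (submodules of $R[t,t^{-1}]$ over the polynomial ring $T \cong R[X_1,\dots,X_d]$), and exploit the hypothesis $J \cap I_2 = JI_1$: it guarantees, via a Valabrega--Valla type argument, that the generators $a_1 t,\dots,a_d t$ of $T_1$ act regularly on the ambient module modulo $\p$-torsion, forcing $\p$ to be the only associated prime. Once $\Ass_T C^{(2)} = \{\p\}$, the equality $\dim_T C^{(2)} = \dim T/\p = d$ is immediate. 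I expect this step---ruling out embedded primes above $\p$---to be the main obstacle.

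For (2) I would combine the classical Sally-module identity $\ell_R(R/I_{n+1}) = \e_0(\mathcal{I})\binom{n+d}{d} - (\e_0(\mathcal{I}) - \ell_R(R/I_1))\binom{n+d-1}{d-1} - \ell_R([C^{(1)}]_n)$, which rests only on $\ell_R(R/J^{n+1}) = \e_0(\mathcal{I})\binom{n+d}{d}$ and the freeness of $G(J)$ over $R/J$, with the exact sequence $0 \to L^{(1)} \to C^{(1)} \to C^{(2)} \to 0$. The hypothesis $J \cap I_2 = JI_1$ forces $L^{(1)}$ to be a Cohen--Macaulay $T$-module of dimension $d$ with $\ell_R([L^{(1)}]_n) = \ell_R(I_2/JI_1)\binom{n+d-2}{d-1}$ (up to a shift it is $(I_2/JI_1)\otimes_{R/J}G(J)$); substituting $\ell_R([C^{(1)}]_n) = \ell_R([L^{(1)}]_n) + \ell_R([C^{(2)}]_n)$ and applying Pascal's identity $\binom{n+d-2}{d-1} = \binom{n+d-1}{d-1} - \binom{n+d-2}{d-2}$ rearranges everything into the stated formula. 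Then (4) follows from (2) by summing $\sum_n \binom{n+d-j}{d-j}z^n = (1-z)^{-(d-j+1)}$ and multiplying by $(1-z)$, and (3) by reading the coefficient of $\binom{n+d-1}{d-1}$ for $n \gg 0$, using that the degree-$(d-1)$ leading term of $\ell_R([C^{(2)}]_n)$ has coefficient $\ell_{T_\p}(C^{(2)}_\p)$ since $T/\p$ has multiplicity one.

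Finally, for (5) I would track depth through these sequences via local cohomology with respect to the graded maximal ideal $\mathfrak{M}$. Since $L^{(1)}$ is Cohen--Macaulay of dimension $d$, the long exact sequence of $0 \to L^{(1)} \to C^{(1)} \to C^{(2)} \to 0$ gives $H^i_{\mathfrak M}(C^{(1)}) \cong H^i_{\mathfrak M}(C^{(2)})$ for $i \leq d-2$ and an injection in degree $d-1$, whence $\depth_T C^{(1)} = \depth_T C^{(2)}$ when $\depth_T C^{(2)} \leq d-2$; the standard sequences $0 \to I_1 T \to \mathcal{R}(\mathcal{I})_{\geq 1}t^{-1} \to C^{(1)} \to 0$ and $0 \to \mathcal{R}(\mathcal{I})_+(+1) \to \mathcal{R}(\mathcal{I}) \to G(\mathcal{I}) \to 0$, with $T$ and $\mathcal{R}(J)$ Cohen--Macaulay, then shift depth by one to yield $\depth G(\mathcal{I}) = \depth_T C^{(2)} - 1$ in the range $\depth_T C^{(2)} < d$. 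The ``moreover'' is the boundary case: $\depth G(\mathcal{I}) \geq d-1$ forces $\depth_T C^{(2)} \geq d = \dim_T C^{(2)}$, i.e. $C^{(2)}$ Cohen--Macaulay, and conversely. Besides part (1), the delicate point is keeping the degree shifts and the Cohen--Macaulay hypotheses on the flanking modules exactly aligned so that the one-step depth shift remains clean at the boundary $i = d-1$.
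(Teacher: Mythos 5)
The paper does not actually prove this proposition: it is imported verbatim from \cite[Proposition 2.4]{MOR17}, so the only comparison available is with the standard argument given there, and your plan follows essentially that route. Your parts (2), (3) and (4) are correct and complete in outline: the classical Sally-module identity, the identification of $L^{(1)}$ with $(I_2/JI_1)\otimes_{R/J}G(J)(-1)$ (this is precisely where $J\cap I_2=JI_1$ enters, via Itoh's lemma propagating it to $J^{n}\cap J^{n-1}I_2=J^{n}I_1$ for all $n$, which gives that $L^{(1)}$ is Cohen--Macaulay of dimension $d$ with the stated Hilbert function), Pascal's identity, and the multiplicity count $\e(C^{(2)})=\ell_{T_{\p}}(C^{(2)}_{\p})$ using $\e(T/\p)=1$ all check out.

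The two points you yourself flag as delicate are, as written, genuine gaps. In (1), after reducing to $P\supseteq\p$, the appeal to a ``Valabrega--Valla type argument'' making $a_1t,\dots,a_dt$ ``act regularly modulo $\p$-torsion'' is not an argument; what is needed is a depth count at a prime $P\supsetneq\p$ in the presentation $0\to I_2Tt\to \mathcal{R}(\mathcal{I})_{\geq 2}t^{-1}\to C^{(2)}\to 0$, showing both flanking modules have depth at least $2$ locally at such $P$, which is where the hypothesis and the Cohen--Macaulayness of $R$ do the work. (Also, your parenthetical $T\cong R[X_1,\dots,X_d]$ is false for $d\geq 2$: $\mathcal{R}(J)$ has dimension $d+1$ and is a polynomial ring only modulo $\m$; and $T$ need not be a domain, so ``torsion-free'' has to be handled with care.) In (5), the long exact sequence of $0\to L^{(1)}\to S\to C^{(2)}\to 0$ gives $\H^i_{\fkM}(S)\cong \H^i_{\fkM}(C^{(2)})$ for $i\leq d-2$ and only an injection $\H^{d-1}_{\fkM}(S)\hookrightarrow \H^{d-1}_{\fkM}(C^{(2)})$; this yields the displayed equality when $\depth_TC^{(2)}\leq d-2$ and the ``if'' half of the moreover, but the ``only if'' half ($\depth G(\mathcal{I})\geq d-1\Rightarrow C^{(2)}$ Cohen--Macaulay), and with it the boundary case $\depth_TC^{(2)}=d-1$ of the equality, cannot be extracted from this sequence: there $\H^{d-1}_{\fkM}(C^{(2)})$ only injects into $\H^{d}_{\fkM}(L^{(1)})$, which is nonzero. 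Closing that direction requires working with the presentation of $C^{(2)}$ by $I_2Tt$ and $\mathcal{R}(\mathcal{I})_{\geq 2}t^{-1}$ directly, as in \cite{MOR17}, rather than only through $S$ and $L^{(1)}$.
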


We recall that, by using  \cite[Proposition 2.11]{MOR17} and  \cite{Phu18}, we have the following interesting result.

\begin{prop}\label{S2}  
Let $d \geq 2$. Then the graded $T$-module ${C}^{(2)}_J(\{\ov{I^n}\}_{n \in \Z})$ satisfies the Serre's property $(S_2)$ as a $T/\Ann_T \,( {C}^{(2)}_J(\{\ov{I^n}\}_{n \in \Z}))$-module.
\end{prop}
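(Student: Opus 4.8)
The plan is to verify the defining condition of $(S_2)$ one homogeneous prime at a time. Write $M:=C^{(2)}_J(\{\ov{I^n}\}_{n\in\Z})$ and $B:=T/\Ann_T M$; then $M$ is a faithful $B$-module, so $\dim_{B_\Q}M_\Q=\height_B\Q$ for every $\Q\in\Spec B$, and the task reduces to showing $\depth_{B_\Q}M_\Q\ge\min\{2,\height_B\Q\}$. Since $M$ is a finitely generated graded $T$-module, it is enough to test this at homogeneous primes. By Proposition \ref{C_2}(1) we have $\Ass_TM=\{\p\}$ with $\p=\m T$, so $\p B$ is the unique minimal (hence unique associated) prime of $B$; thus $M$ has no embedded primes and the inequality is automatic whenever $\height_B\Q\le 1$. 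The whole content is therefore the estimate $\depth_{B_\Q}M_\Q\ge 2$ at each homogeneous $\Q$ with $\height_B\Q\ge 2$, the extreme case being the graded maximal ideal $\fkM$, where $\height_B\fkM=\dim B=d\ge 2$.

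For the prime $\fkM$ I would argue directly through Proposition \ref{C_2}(5), which reads $\depth\,G(\mathcal{I})=\depth_TM-1$ when $\depth_TM<d$. The feature special to the normal filtration is that $\mathcal{R}(\{\ov{I^n}\})=\ov{\mathcal{R}}(I)$ is the integral closure of the Rees algebra $\mathcal{R}(I)$ in $R[t]$, which is module finite over $\mathcal{R}(I)$ because $R$ is analytically unramified; this integral-closure structure is what yields the positive-depth estimate for the normal associated graded ring $\ov{G}(I)$ that \cite{Phu18} provides. Granting $\depth\,\ov{G}(I)\ge 1$, Proposition \ref{C_2}(5) gives $\depth_TM\ge 2$ when $\depth_TM<d$, while if $\depth_TM=d$ we are already done because $d\ge 2$; either way $\depth_{B_{\fkM}}M_{\fkM}\ge 2$.

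To treat a homogeneous prime $\Q\ne\fkM$ with $\height_B\Q\ge 2$ (since $T$ is Cohen-Macaulay, $B$ is equidimensional and catenary, so such a $\Q$ has $\height_B\Q\le d-1$) I would localize and induct on $d$. Because integral closure commutes with localization, $(\ov{I^n})_\q=\ov{(I_\q)^n}$ for $\q=\Q\cap R$, so the localized filtration is again the normal filtration of the analytically unramified Cohen-Macaulay ring $R_\q$, now of dimension $<d$; here \cite[Proposition 2.11]{MOR17} is the bridge guaranteeing that $M$ localizes compatibly to the $C^{(2)}$ of this localized filtration, after the harmless inversion of positive-degree elements. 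The induction hypothesis, together with the top-degree estimate of the previous paragraph applied in the localized ring, then yields $\depth_{B_\Q}M_\Q\ge 2$; the base cases $\dim\le 1$ are covered by the $(S_1)$ already established.

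The step I expect to be the main obstacle is the localization-and-induction above. Serre's $(S_2)$ is not inherited along the defining exact sequences $0\to L^{(\ell)}\to C^{(\ell)}\to C^{(\ell+1)}\to 0$, so the transfer cannot be carried out by naively pushing depth along the filtration; instead one must show that the only possible failure of depth two is concentrated in local cohomology governed by the integrally closed ring $\ov{\mathcal{R}}(I)$, and that the normality-driven vanishing behind \cite[Proposition 2.11]{MOR17} and \cite{Phu18} eliminates it uniformly, not merely at $\fkM$. Verifying that the localization of $M$ is genuinely the $C^{(2)}$ of a localized normal filtration—tracking the degree shifts $t^{\ell-1}$ in Definition \ref{VP} and checking that analytic unramifiedness descends to $R_\q$—is the delicate bookkeeping on which the argument rests.
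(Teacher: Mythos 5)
The paper does not actually prove this proposition: it is recalled as a consequence of \cite[Proposition 2.11]{MOR17} together with \cite{Phu18}, so there is no internal argument to compare yours against line by line. Judged on its own terms, your outline is fine up through the reduction to homogeneous primes, the $(S_1)$ statement via Proposition \ref{C_2}(1), and the graded maximal ideal: there, $\depth\,\ov{G}(I)\geq 1$ (Itoh/Huneke; $a_1t$ is $\ov{G}(I)$-regular) combined with Proposition \ref{C_2}(5) does give $\depth_T C^{(2)}\geq 2$.

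The genuine gap is the localization-and-induction step, and it fails for a structural reason, not just a bookkeeping one. By Proposition \ref{C_2}(1), $\Ass_T C^{(2)}=\{\p\}$ with $\p=\m T$, so the minimal primes of $\Supp_T C^{(2)}$ lie in $\{\p\}$ and hence $\Supp_T C^{(2)}\subseteq V(\m T)$; equivalently $\sqrt{\Ann_T C^{(2)}}=\m T$ and $\Spec B$ is homeomorphic to $\Spec\bigl((R/\m)[X_1,\dots,X_d]\bigr)$. Consequently \emph{every} prime $\Q$ in the support contracts to $\q=\Q\cap R=\m$, so the ring $R_\q$ you propose to induct on is just $R$ itself, never of smaller dimension, and the induction on $d$ cannot start. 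The non-maximal homogeneous primes of $B$ that actually need treatment are those not containing the irrelevant ideal $JtT$; handling depth there requires inverting a degree-one element $a_it$ (or passing to a superficial element/graded local cohomology argument), and $C^{(2)}[(a_it)^{-1}]$ is not in any evident way the $C^{(2)}$ of a normal filtration on a lower-dimensional analytically unramified ring. You flag this step yourself as ``the main obstacle,'' but the proposal as written supplies no mechanism to overcome it; this is precisely the content carried by \cite[Proposition 2.11]{MOR17} and \cite{Phu18}, and it cannot be recovered by localizing $R$ at $\Q\cap R$.
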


\section{The structure of the Sally module when $\ov{\e}_3(I) \neq 0$} 
In this section we prove the first main result of this paper (Theorem \ref{maintheorem1}). 
We set $\overline{C}=\ov{C}_J(I)=C^{(2)}_J(\{\overline{I^n}\}_{n \in \Z})$. 
In the following theorem we recall few results on the vanishing of local cohomology modules from \cite{Itoh92} (see also \cite{HU14}). 
From now onwards we set $M'=(t^{-1},\m, It){\mathcal R'}(I)$ for the graded maximal ideal of ${\mathcal R'}(I)$ and $N'=It{\mathcal R'}(I)$.

\begin{thm}{$($\cite[Proposition 13]{Itoh92}$)$}\label{Itoh2}
Suppose that $d \geq 2$. 
Then we have the following. 
\begin{enumerate}
\item $[\H_{N'}^i(\ov{\mathcal R'}(I))]_n=(0)$ for all $ n \gg 0$ and all $i \geq 0$;
\item \label{Itoh2(2)} $\H_{M'}^0(\ov{\mathcal R'}(I))=\H_{M'}^1(\ov{\mathcal R'}(I))=(0)$;
\item \label{Itoh2(3)} $[\H_{M'}^2(\ov{\mathcal R'}(I))]_n=(0)$ for $n \leq 0$;
\item $\H_{M'}^i(\ov{\mathcal R'}(I)) \cong \H_{N'}^i(\ov{\mathcal R'}(I))$ for $0 \leq i \leq d-1$.
\end{enumerate}
\end{thm}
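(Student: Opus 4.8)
The plan is to treat $A := \ov{\mathcal R'}(I)$ as a Noetherian $\Z$-graded domain of dimension $d+1$, module-finite over the extended Rees algebra $\mathcal R'(J)$ of the minimal reduction $J$, and to exploit three structural inputs: (i) $A$ is integrally closed in $R[t,t^{-1}]$, which for the normal filtration forces Serre's condition $(S_2)$ and hence the estimate $\depth_{M'} A \ge \min\{2,\dim A\}$; (ii) $t^{-1}$ is a homogeneous nonzerodivisor with $A/t^{-1}A \cong \ov G(I)$, the latter concentrated in nonnegative degrees, giving the fundamental graded exact sequence $0 \to A(1) \xrightarrow{t^{-1}} A \to \ov G(I) \to 0$; and (iii) the standard finiteness of the $a$-invariant for graded local cohomology with respect to an ideal generated in positive degree. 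I would prove the four assertions in the order (1), (2), (4), (3), since each leans on its predecessors.

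For (1), note that by Rees's theorem analytic unramifiedness makes $\bigoplus_n \ov{I^n}$ module-finite over $R[It]$, so $A$ is a finitely generated graded $R$-algebra and $N' = (\ov I\, t)A$ is generated in degree one. The general fact that graded local cohomology with respect to a positively generated ideal has finite top degree then yields $[\H^i_{N'}(A)]_n = 0$ for $n \gg 0$ and all $i$. For (2), $\H^0_{M'}(A) = 0$ because $A$ is a domain and hence $M'$-torsion-free, while $\H^1_{M'}(A) = 0$ is equivalent to $\depth_{M'} A \ge 2$, which follows from $(S_2)$ together with $\height M' = \dim A = d+1 \ge 3$. This is the step where integral closedness of the normal Rees algebra is genuinely used rather than formal homological algebra.

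For (4), I would compare the two ideals $N' \subseteq M'$ through the composite-functor (Grothendieck) spectral sequence $\H^p_{\mathfrak n}\big(\H^q_{N'}(A)\big) \Rightarrow \H^{p+q}_{M'}(A)$, where $\mathfrak n = (t^{-1},\m)A$ and $M' = N' + \mathfrak n$, so that $\Gamma_{M'} = \Gamma_{\mathfrak n}\circ\Gamma_{N'}$. The task is to show that for total degree $p+q \le d-1$ only the edge terms $E_2^{0,q}$ survive, which forces $\H^i_{M'}(A) \cong \H^i_{N'}(A)$ in that range. Here the Cohen--Macaulayness of $R$, entering as $\H^j_{\m}(R) = 0$ for $j < d$, combined with the degree-boundedness from (1), kills the off-edge terms below cohomological degree $d$; the careful index and degree bookkeeping in this step is one of the two main technical hurdles.

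Finally, for (3), I would apply $\H^\bullet_{M'}(-)$ to the sequence $0 \to A(1) \xrightarrow{t^{-1}} A \to \ov G(I) \to 0$, using $\H^i_{M'}(\ov G(I)) = \H^i_{\overline{\mathfrak M}}(\ov G(I))$ for the graded maximal ideal $\overline{\mathfrak M}$ of $\ov G(I)$ (since $t^{-1}$ annihilates $\ov G(I)$). Invoking $\H^1_{M'}(A) = 0$ from (2), the long exact sequence yields, in each degree $n$, an exact sequence $0 \to [\H^1_{\overline{\mathfrak M}}(\ov G(I))]_n \to [\H^2_{M'}(A)]_{n+1} \xrightarrow{t^{-1}} [\H^2_{M'}(A)]_n$. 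I would then propagate vanishing upward from $n \ll 0$: since $\ov G(I)$ lives in nonnegative degrees, the refined normal-filtration estimates give $[\H^1_{\overline{\mathfrak M}}(\ov G(I))]_n = 0$ for $n < 0$, and together with a base vanishing $[\H^2_{M'}(A)]_n = 0$ for $n \ll 0$ this forces $[\H^2_{M'}(A)]_n = 0$ for all $n \le 0$. The hard part, and the crux of the whole theorem, is exactly establishing these negative-degree vanishings for the associated graded ring of the normal filtration and the base case: this is where one must use the integral-closedness of $\{\ov{I^n}\}$ in an essential, non-formal way, rather than generic properties of a graded algebra.
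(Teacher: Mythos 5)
First, a point of comparison: the paper offers no proof of Theorem \ref{Itoh2} at all --- it is quoted from Itoh \cite[Proposition 13]{Itoh92} (with \cite{HU14} as a secondary reference) --- so the only benchmark for your argument is Itoh's original proof, not anything internal to this paper.

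Measured against that, your outline has genuine gaps at exactly the points carrying the mathematical content. The most concrete error is in step (2): you deduce $(S_2)$, hence $\depth_{M'}\ov{\mathcal R'}(I)\geq 2$, from the fact that $\ov{\mathcal R'}(I)$ is integrally closed \emph{in} $R[t,t^{-1}]$. Serre's criterion yields $(S_2)$ for rings integrally closed in their total quotient ring; under the standing hypotheses ($R$ analytically unramified Cohen--Macaulay, not necessarily normal, not even necessarily a domain) neither $R[t,t^{-1}]$ nor $\ov{\mathcal R'}(I)$ need be normal, so $(S_2)$ does not follow, and ``$A$ is a domain'' is likewise unavailable. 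The conclusion $\H_{M'}^0=\H_{M'}^1=(0)$ is true but is obtained differently: $\H^0$ vanishes because $\depth R\geq 2$ kills $\m$-torsion inside $R[t,t^{-1}]$, and $\H^1$ vanishes by combining the sequence $0\to \ov{\mathcal R'}(I)(1)\overset{t^{-1}}{\to}\ov{\mathcal R'}(I)\to\ov{G}(I)\to 0$ with the fact that $\depth\ov{G}(I)>0$ (via the valuative criterion $\ov{I^{n+1}}:a=\ov{I^n}$ for general $a\in I$), which makes $t^{-1}$ injective on $\H^1_{M'}$ and forces it to vanish against the boundedness of its graded pieces. Second, and more seriously, in both (4) and (3) you explicitly defer the decisive steps --- the collapse of your spectral sequence below cohomological degree $d$, and the vanishings $[\H^1(\ov{G}(I))]_n=(0)$ for $n<0$ together with the base case $[\H_{M'}^2(\ov{\mathcal R'}(I))]_n=(0)$ for $n\ll 0$. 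These are not ``index bookkeeping'': assertion (3) is the deep part of Itoh's Proposition 13 and rests on his intersection theorems for integrally closed filtrations (statements of the type $\ov{I^{n+1}}\cap J^{n}=J^{n}\ov{I}$ and $\ov{I^{n+1}}\cap(a_1,\dots,a_i)=(a_1,\dots,a_i)\ov{I^{n}}$), none of which appear in your outline. As written, the proposal correctly locates where the difficulty lies but does not supply an argument for it.
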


To prove the main result of this section we use induction on the dimension $d.$ 
One of the main difficulties in applying the induction on $d$ for the normal filtration is that the image of a normal ideal going modulo a superficial element need not be normal.  Thanks to  \cite[Theorem 1]{Itoh92} (see also  \cite{HU14}) we may choose $a_1 \in I $ such that $\ov{I(R/(a_1))}=\ov{I}(R/(a_1))$, and $\ov{I^{n}(R/(a_1))}=\ov{I^n}(R/(a_1))$ for all $n \gg 0$. In particular $a_1t$ is $\ov{G}(I)$-regular. 
From now onwards we set $S=R/(a_1)$.  We prove  the following important  lemma which shows that one of the main difficulties in the study of the normal Hilbert coefficients is that in general 
$\ov{I^nS} \neq \ov{I^n} S$ for $n \in \Z.$

\begin{lem}\label{cohomology}
Suppose that $d \geq 3$.
Then 
% we have the following where $M'=(\m, T_+,t^{-1}){\mathcal R'}(J)$ 
\begin{itemize}
\item[$(1)$] $\H_{M'}^1({\mathcal R'}(\{\ov{I^n}S\}_{n \in \Z}))_n \cong \ov{I^nS}/\ov{I^n}S$ for all $n \in \Z$, and
\item[$(2)$] $\H_{M'}^i(\ov{\mathcal R'}(IS)) \cong \H_{M'}^i({\mathcal R'}(\{\ov{I^n}S\}_{n \in \Z}))$ for all $i \geq 2$.
\end{itemize}
\end{lem}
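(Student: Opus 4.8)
The plan is to deduce both isomorphisms from a single short exact sequence of graded modules over $\mathcal{R}'(IS)$, combined with the vanishing statements of Theorem~\ref{Itoh2} applied to the ring $S$, whose dimension is $d-1\ge 2$ precisely because $d\ge 3$. First I would observe that $\ov{I^n}S\subseteq \ov{I^nS}$ for every $n$: the image in $S$ of an element of $\ov{I^n}$ is integral over $(I^n)S$, hence lies in $\ov{I^nS}$. These inclusions are compatible with the gradings and the ring structures, so they assemble into an inclusion of extended Rees algebras $\mathcal{R}'(\{\ov{I^n}S\}_{n\in\Z})\hookrightarrow \ov{\mathcal{R}'}(IS)$, and therefore into a short exact sequence of finitely generated graded $\mathcal{R}'(IS)$-modules
$$0\longrightarrow \mathcal{R}'(\{\ov{I^n}S\}_{n\in\Z})\longrightarrow \ov{\mathcal{R}'}(IS)\longrightarrow \mathcal{Q}\longrightarrow 0,\qquad \mathcal{Q}=\bigoplus_{n\in\Z}\ov{I^nS}/\ov{I^n}S.$$

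Next I would check that $\mathcal{Q}$ has finite length, whence $\H^0_{M'}(\mathcal{Q})=\mathcal{Q}$ and $\H^i_{M'}(\mathcal{Q})=(0)$ for all $i\ge 1$. Indeed, for $n\le 0$ all the ideals involved equal $S$, for $n=1$ the choice of $a_1$ gives $\ov{IS}=\ov{I}S$, and for $n\gg 0$ the choice of $a_1$ gives $\ov{I^nS}=\ov{I^n}S$; hence $\mathcal{Q}_n=(0)$ outside a finite range of degrees. Since each $\mathcal{Q}_n=\ov{I^nS}/\ov{I^n}S$ is a finite-length $S$-module, $\mathcal{Q}$ is annihilated by a power of $M'$, which gives the asserted cohomology.

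Then I would apply Theorem~\ref{Itoh2}\,(2) to $S$: as $\dim S=d-1\ge 2$ and $S$ is analytically unramified Cohen--Macaulay, we get $\H^0_{M'}(\ov{\mathcal{R}'}(IS))=\H^1_{M'}(\ov{\mathcal{R}'}(IS))=(0)$. Substituting these together with the cohomology of $\mathcal{Q}$ into the long exact local cohomology sequence of the displayed short exact sequence, the terms $\H^{i-1}_{M'}(\mathcal{Q})$ and $\H^i_{M'}(\mathcal{Q})$ both vanish for $i\ge 2$, yielding $\H^i_{M'}(\mathcal{R}'(\{\ov{I^n}S\}_{n\in\Z}))\cong \H^i_{M'}(\ov{\mathcal{R}'}(IS))$ for $i\ge 2$, which is (2). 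In low degrees the same sequence collapses to $0\to\mathcal{Q}\to \H^1_{M'}(\mathcal{R}'(\{\ov{I^n}S\}_{n\in\Z}))\to 0$, so reading off degree $n$ gives $\H^1_{M'}(\mathcal{R}'(\{\ov{I^n}S\}_{n\in\Z}))_n\cong \ov{I^nS}/\ov{I^n}S$, which is (1).

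The step I expect to be delicate is the legitimacy of invoking Theorem~\ref{Itoh2} over $S$, namely that $S=R/(a_1)$ is again analytically unramified, so that $\{\ov{I^nS}\}_{n\in\Z}$ is $IS$-admissible and $\ov{\mathcal{R}'}(IS)$ is module-finite over $S[t,t^{-1}]$, together with the two boundedness facts $\ov{IS}=\ov{I}S$ and $\ov{I^nS}=\ov{I^n}S$ for $n\gg 0$. This is exactly where the special choice of $a_1$ furnished by \cite[Theorem~1]{Itoh92} is used; once these properties are in hand, the remainder is formal homological bookkeeping.
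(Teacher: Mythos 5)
Your proof is correct and follows essentially the same route as the paper: the same short exact sequence with finitely graded (hence $M'$-torsion) cokernel $\bigoplus_n \ov{I^nS}/\ov{I^n}S$, the same appeal to Theorem~\ref{Itoh2}(2) over $S$ (legitimate since $\dim S = d-1 \ge 2$), and the same reading of the long exact cohomology sequence. The extra care you take in justifying why the cokernel is concentrated in finitely many degrees and why Theorem~\ref{Itoh2} applies to $S$ matches exactly the role of the special choice of $a_1$ from \cite[Theorem 1]{Itoh92} in the paper's argument.
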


\begin{proof}
Consider the canonical exact sequence 
$$ 0 \to \mathcal{R'}(\{\ov{I^n}S\}_{n \in \Z}) \to \ov{\mathcal{R'}}(IS) \to \ov{\mathcal{R'}}(IS)/\mathcal{R'}(\{\ov{I^n}S\}_{n \in \Z}) \to 0 $$
of graded $T$-modules. 
Then, since $\ov{I^nS}=\ov{I^n} S$ for all $n \leq 0$ and, by Theorem \cite[Theorem 1]{Itoh92},  for all $n \gg 0$, the module $\ov{\mathcal{R'}}(IS)/\mathcal{R'}(\{\ov{I^n}S\}_{n \in \Z})\cong \bigoplus_{n \in \Z} \ov{I^nS}/\ov{I^n} S$ is finitely graded. 
% Hence $[\ov{\mathcal{R'}}(IR')/\mathcal{R'}(\{\ov{I^n}R'\}_{n \in \Z})]_n \cong \ov{I^nR'}/\ov{I^n} R'$ for $n \in \Z$ and by Theorem \ref{Itoh1}\eqref{Itoh1(2)} $\ov{I^nR'}=\ov{I^n} R'$ for all $n \gg 0$ and 
Therefore taking the local cohomology functor $\H_{M'}^i(*)$ to the above exact sequence and using Theorem \ref{Itoh2}\eqref{Itoh2(2)}, we get $\H^1_{M'}({\mathcal R'}(\{\ov{I^n}S\}_{n \in \Z})) \cong \ov{\mathcal{R'}}(IS)/\mathcal{R'}(\{\ov{I^n}S\}_{n \in \Z})$ and $\H^i_{M'}({\mathcal R'}(\{\ov{I^n}S\}_{n \in \Z}))) \cong \H^i_{M'}(\ov{\mathcal{R'}}(IS))$ for $i \geq 2$ as required.
\end{proof}

We remark that the following result works like  the Sally's machine  \cite[Lemma 1.4]{RV10}, but  is not a consequence of it. 

% We have the following result which corresponds to the {\it Sally's machine} for the associate graded rings of normal filtrations.

\begin{prop}\label{sally}
Assume  $d \geq 3$ and   $\depth~ \ov{\rmG}(IS) \geq 2$.
Then we have $\depth ~\ov{\rmG}(I) =\depth ~\ov{\rmG}(IS)+1. $ 
\end{prop}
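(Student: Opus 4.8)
The plan is to run Sally's machine for the normal filtration, the twist being that $\ov{G}(I)/(a_1t)$ is the associated graded ring of the \emph{image} filtration $\{\ov{I^n}S\}_{n\in\Z}$ rather than of the normal filtration $\{\ov{I^nS}\}_{n\in\Z}$ of $IS$; the real content is to show these two filtrations agree under the hypothesis. First I would record the reductions. Since $a_1t$ is $\ov{G}(I)$-regular, the Valabrega--Valla criterion gives $\ov{I^n}\cap(a_1)=a_1\ov{I^{n-1}}$ for all $n$, whence $\ov{I^n}S\cong\ov{I^n}/a_1\ov{I^{n-1}}$. This identifies $\ov{G}(I)/(a_1t)\cong G(\{\ov{I^n}S\}_{n\in\Z})$ and, at the level of extended Rees algebras, $\mathcal{R}'(\{\ov{I^n}S\}_{n\in\Z})\cong \ov{\mathcal{R}'}(I)/(a_1t)$ with $a_1t$ a nonzerodivisor on $\ov{\mathcal{R}'}(I)$. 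Hence if I can prove $\ov{I^nS}=\ov{I^n}S$ for all $n$, then $\ov{G}(I)/(a_1t)\cong\ov{G}(IS)$ and the regularity of $a_1t$ gives at once $\depth\ov{G}(I)=\depth\ov{G}(IS)+1$.

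Thus the whole proof reduces to the vanishing of the finite length defect $C:=\bigoplus_n \ov{I^nS}/\ov{I^n}S$. By Lemma \ref{cohomology}(1), $C\cong \H^1_{M'}(\mathcal{R}'(\{\ov{I^n}S\}_{n\in\Z}))$. On the other hand, the hypothesis $\depth\ov{G}(IS)\geq 2$ forces $\depth\ov{\mathcal{R}'}(IS)\geq 3$ (since $t^{-1}$ is regular with quotient $\ov{G}(IS)$), so $\H^2_{M'}(\ov{\mathcal{R}'}(IS))=0$; by Lemma \ref{cohomology}(2) this transfers to $\H^2_{M'}(\mathcal{R}'(\{\ov{I^n}S\}_{n\in\Z}))=0$. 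I would then feed the regular-element sequence
$$0\to \ov{\mathcal{R}'}(I)(-1)\xrightarrow{a_1t}\ov{\mathcal{R}'}(I)\to \mathcal{R}'(\{\ov{I^n}S\}_{n\in\Z})\to 0$$
into the long exact sequence of $\H^\bullet_{M'}$. Using $\H^1_{M'}(\ov{\mathcal{R}'}(I))=0$ from Theorem \ref{Itoh2}\eqref{Itoh2(2)}, the relevant stretch becomes
$$0\to C\to \H^2_{M'}(\ov{\mathcal{R}'}(I))(-1)\xrightarrow{a_1t}\H^2_{M'}(\ov{\mathcal{R}'}(I))\to \H^2_{M'}(\mathcal{R}'(\{\ov{I^n}S\}_{n\in\Z}))=0,$$
so that $a_1t$ acts surjectively on $H:=\H^2_{M'}(\ov{\mathcal{R}'}(I))$ and $C=\ker(a_1t\colon H(-1)\to H)$.

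To finish, I would exploit Theorem \ref{Itoh2}\eqref{Itoh2(3)}, which gives $[H]_n=0$ for all $n\leq 0$. Surjectivity of $a_1t$ in degree $n$ reads $[H]_n=a_1t\,[H]_{n-1}$, so an upward induction starting from $[H]_{\leq 0}=0$ yields $[H]_n=0$ for every $n$, that is $H=0$. Consequently $C=\ker(a_1t\colon H(-1)\to H)=0$, which is exactly the equality $\ov{I^nS}=\ov{I^n}S$ for all $n$, and the reduction of the first paragraph then completes the proof.

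The main obstacle is precisely this vanishing $C=0$: it is the rigorous form of the difficulty stressed just before the statement, namely that passing modulo $a_1$ need not commute with taking integral closures. It cannot be extracted from formal depth bookkeeping alone (a priori $C\cong\H^1_{M'}(\mathcal{R}'(\{\ov{I^n}S\}_{n\in\Z}))$ could be a nonzero finite-length module, which would even drop $\depth\ov{G}(I)$ to $1$), and it is overcome only by combining the hypothesis $\depth\ov{G}(IS)\geq 2$ with Itoh's vanishing results in Theorem \ref{Itoh2} through the surjectivity-plus-induction step above; the assumption $d\geq 3$ is used here both so that Lemma \ref{cohomology} applies and so that $\dim S\geq 2$.
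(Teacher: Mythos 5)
Your proof is correct and follows essentially the same route as the paper: the long exact sequence of $\H_{M'}^{\bullet}$ applied to $0 \to \ov{\mathcal R'}(I)(-1) \xrightarrow{a_1t} \ov{\mathcal R'}(I) \to \mathcal R'(\{\ov{I^n}S\}_{n\in\Z}) \to 0$, the vanishing of $\H^2_{M'}(\ov{\mathcal R'}(IS))$ from the depth hypothesis via Lemma \ref{cohomology}(2), the resulting surjectivity of $a_1t$ on $\H^2_{M'}(\ov{\mathcal R'}(I))$ forcing that module (and hence the defect $\bigoplus_n \ov{I^nS}/\ov{I^n}S$) to vanish, and the final depth bookkeeping. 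The only cosmetic difference is that you start the upward induction from Theorem \ref{Itoh2}(3), while the paper invokes the finitely graded property from Theorem \ref{Itoh2}(1),(4); both are valid.
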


\begin{proof}
Let us look at the exact sequence
$$ 0 \to \H_{M'}^1({\mathcal R'}(\{\ov{I^n}S\}_{n \in \Z})) \to \H_{M'}^2(\ov{\mathcal R'}(I))(-1) \to \H_{M'}^2(\ov{\mathcal R'}(I)) \to \H_{M'}^2({\mathcal R'}(\{\ov{I^n}S\}_{n \in \Z})) \to $$
of local cohomology modules induced by the canonical exact sequence
$$ 0 \longrightarrow \ov{\mathcal R'}(I)(-1) \overset{a_1t}{\longrightarrow} \ov{\mathcal R'}(I) \longrightarrow {\mathcal R'}(\{\ov{I^n}S\}_{n \in \Z}) \longrightarrow 0.$$
Because $\depth\ \ov{\rmG}(IS) \geq 2$, we have $\depth\ \ov{\mathcal R'}(IS) \geq 3$ so that $\H_{M'}^2({\mathcal R'}(\{\ov{I^n}S\}_{n \in \Z}) \cong \H_{M'}^2(\ov{\mathcal R'}(IS))=(0)$ by Lemma \ref{cohomology}$(2)$.
This gives an epimorphism $ \H_{M'}^2(\ov{\mathcal R'}(I))(-1) \to \H_{M'}^2(\ov{\mathcal R'}(I)) \to 0 $ of graded $T$-modules.
Then since $\H_{M'}^2(\ov{\mathcal R'}(I))$ is finitely graded by Theorem \ref{Itoh2}, we get $\H_{M'}^2(\ov{\mathcal R'}(I))=(0)$ so that $\H_{M'}^1({\mathcal R'}(\{\ov{I^n}S\}_{n \in \Z}))=(0)$ by the above exact sequence.
Then because $\ov{I^n S}=\ov{I^n}S$ for all $n \in \Z$ by Lemma \ref{cohomology}$(1)$, we have $\ov{\mathcal R'}(IS) = {\mathcal R'}(\{\ov{I^n}S\}_{n \in \Z})$.
Thus, we get $\depth\ \ov{\rmG}(I) = \depth\ \ov{\mathcal R'}(I)-1= \depth\ {\mathcal R'}(\{\ov{I^n}S\}_{n \in \Z}) = \depth\ \ov{\mathcal R'}(IS) = \depth\ \ov{\rmG}(IS)+1$ as required.
\end{proof}

The following lemma is a consequence of the Grothendieck-Serre formula \cite[Theorem 4.1]{Bla97}.  % plays a crucial role in the study of the normal Hilbert coefficients.  

\begin{lem}\label{rem}
%We have the following.
\begin{asparaenum}
 \item[$(1)$] \label{rem1} Suppose $d \geq 3$. Then, for each $n \in \Z$, we have
{\small {$$ \sum_{i=0}^d(-1)^i \ov{\e}_i(I)\binom{n+d-i}{d-i}-\ell_R(R/\ov{I^{n+1}})=\sum_{k \geq n+2}\ell_R(\ov{I^kS}/\ov{I^k}S)-\sum_{i=2}^{d-1}(-1)^i\ell_R(\H_{N'}^i(\ov{\mathcal R'}(IS))_{\geq n+2}).$$}}
\item[$(2)$] \label{rem2} Suppose $d=3.$ Then 
\[
\ov{\e}_3(I)= \ov{\e}_2(I)-\ov{\e}_1(I)+\ov{\e}_0(I)-\ell_R(R/\ov{I})-\sum_{n \geq 2}\ell_R(\ov{I^nS}/\ov{I^n}S)+ \ell_R (\H_{N'}^2(\ov{\mathcal R'}(IS))_{\geq 2}). 
 \] 
\end{asparaenum}
\end{lem}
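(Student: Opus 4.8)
The plan is to derive both statements from the Grothendieck--Serre formula \cite[Theorem 4.1]{Bla97} applied to the ring $\ov{\mathcal R'}(IS)$, using the cohomological bridge established in Lemma \ref{cohomology} to pass between $\ov{\mathcal R'}(IS)$ and $\mathcal{R'}(\{\ov{I^n}S\}_{n\in\Z})$. First I would go modulo the superficial element $a_1$ chosen via \cite[Theorem 1]{Itoh92}, so that $a_1t$ is $\ov{G}(I)$-regular and $S=R/(a_1)$ has dimension $d-1$. The Hilbert polynomial of the filtration $\{\ov{I^nS}\}$ on $S$ has coefficients $\ov{\e}_i(IS)$, and the key point is the standard fact that a superficial element leaves the leading Hilbert coefficients unchanged: $\ov{\e}_i(IS)=\ov{\e}_i(I)$ for $0\le i\le d-1$. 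Thus the alternating sum $\sum_{i=0}^d(-1)^i\ov{\e}_i(I)\binom{n+d-i}{d-i}$ restricted to the first $d$ terms should match the Hilbert \emph{polynomial} $P_S(n)$ of $S$, up to the behaviour of the top term.

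For part $(1)$, the Grothendieck--Serre formula expresses the difference between the Hilbert polynomial and the Hilbert function of $\ov{\mathcal R'}(IS)$ (equivalently of $\ov{G}(IS)$) as the alternating sum $\sum_{i\ge 0}(-1)^i\ell_R([\H_{N'}^i(\ov{\mathcal R'}(IS))]_{\ge n+2})$ over the local cohomology with respect to $N'$. The plan is to write this formula out, then split the cohomological contribution into the piece coming from $\H_{N'}^0$ and $\H_{N'}^1$ and the higher pieces. By Lemma \ref{cohomology}$(1)$ the $\H^1_{M'}$ term of $\mathcal{R'}(\{\ov{I^n}S\}_{n\in\Z})$ in degree $n$ is exactly $\ov{I^nS}/\ov{I^n}S$, so summing these degree-by-degree over $k\ge n+2$ produces the term $\sum_{k\ge n+2}\ell_R(\ov{I^kS}/\ov{I^k}S)$. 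Using Theorem \ref{Itoh2}$(4)$ to identify $\H^i_{M'}$ with $\H^i_{N'}$ in the relevant range, together with Lemma \ref{cohomology}$(2)$ which matches $\H^i_{M'}(\ov{\mathcal R'}(IS))$ with $\H^i_{M'}(\mathcal{R'}(\{\ov{I^n}S\}))$ for $i\ge 2$, I would rewrite the remaining higher cohomology as $\sum_{i=2}^{d-1}(-1)^i\ell_R(\H_{N'}^i(\ov{\mathcal R'}(IS))_{\ge n+2})$, with the sum stopping at $d-1$ because $\dim S=d-1$ forces vanishing of $\H^{\ge d}$. Reconciling the signs and the index shift $k\ge n+2$ against the $\binom{n+d-i}{d-i}$ convention is the bookkeeping to get right.

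Part $(2)$ is then the specialisation $d=3$ evaluated at $n=0$. In dimension three the sum $\sum_{i=2}^{d-1}$ collapses to the single index $i=2$, giving the term $\ell_R(\H_{N'}^2(\ov{\mathcal R'}(IS))_{\ge 2})$ with a positive sign since $(-1)^2=1$, and the left-hand alternating sum at $n=0$ becomes $\ov{\e}_0(I)-\ov{\e}_1(I)+\ov{\e}_2(I)-\ov{\e}_3(I)$ minus $\ell_R(R/\ov{I})$ (recall $\ell_R(R/\ov{I^{0+1}})=\ell_R(R/\ov{I})$). Solving the resulting identity for $\ov{\e}_3(I)$ and rearranging the remaining terms yields precisely the displayed formula; the $\sum_{k\ge 2}\ell_R(\ov{I^kS}/\ov{I^k}S)$ term carries over directly.

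The main obstacle I anticipate is \emph{not} the formal cohomology manipulation but the justification that the superficial/filtration-theoretic subtleties are under control, namely that going modulo $a_1$ genuinely realises the coefficients $\ov{\e}_i$ and that the finitely-graded vanishing supplied by Theorem \ref{Itoh2}$(1)$ legitimises interpreting the Grothendieck--Serre correction term as a finite sum; this is exactly where the failure of $\ov{I^nS}=\ov{I^n}S$ (the very phenomenon Lemma \ref{cohomology}$(1)$ measures) must be carefully accounted for rather than swept away. Once the identification of $\H^1_{M'}(\mathcal{R'}(\{\ov{I^n}S\}))_n$ with $\ov{I^nS}/\ov{I^n}S$ from Lemma \ref{cohomology} is used to absorb this discrepancy, the rest reduces to assembling the Grothendieck--Serre terms with the correct signs and summation ranges.
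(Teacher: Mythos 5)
Your proposal assembles the right ingredients (Grothendieck--Serre, Lemma \ref{cohomology}, the superficial element $a_1$), and your reduction of part $(2)$ to part $(1)$ evaluated at $d=3$, $n=0$ is exactly what the paper does. But for part $(1)$ there is a genuine gap, concentrated precisely in the step you defer as ``bookkeeping.'' The left-hand side of $(1)$ is a degree-$d$ polynomial in $n$ minus $\ell_R(R/\ov{I^{n+1}})$; it lives over $R$ and contains $\ov{\e}_d(I)$. Grothendieck--Serre applied to $\ov{\mathcal R'}(IS)$ instead yields $\sum_{i=0}^{d-1}(-1)^i\ov{\e}_i(IS)\binom{n+d-1-i}{d-1-i}-\ell_S(S/\ov{I^{n+1}S})$, a statement about a $(d-1)$-dimensional filtration. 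Your claim that the first $d$ terms of the $R$-polynomial ``match $P_S(n)$'' is false: they have different degrees ($d$ versus $d-1$); the true relation is $P_S=\Delta P_R$ corrected for $\ov{I^{n+1}S}\neq\ov{I^{n+1}}S$. Passing from the $S$-side identity to the $R$-side identity therefore requires summing over all degrees $k\geq n+1$ with an anchor at $k\gg0$; that summation is what produces both the truncations $(-)_{\geq n+2}$ (Grothendieck--Serre by itself gives single graded components, not truncated sums, contrary to how you quote it) and the coefficient $\ov{\e}_d(I)$, which is invisible from $S$. None of this is carried out. There is also an unjustified switch of rings: you apply Grothendieck--Serre to $\ov{\mathcal R'}(IS)$, whose $\H^1_{M'}$ \emph{vanishes} by Theorem \ref{Itoh2}$(2)$ applied to $S$, yet you extract the term $\sum_k\ell_R(\ov{I^kS}/\ov{I^k}S)$ from $\H^1$ of the different ring $\mathcal{R'}(\{\ov{I^n}S\}_{n\in\Z})$; the comparison between the two is itself a computation that must be made explicit.

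The paper goes in the opposite direction and thereby avoids all of this: it applies \cite[Theorem 4.1]{Bla97} to $\ov{\mathcal R'}(I)$, so the left-hand side of $(1)$ appears verbatim as $\sum_{i=2}^{d}(-1)^i\ell_R([\H^i_{N'}(\ov{\mathcal R'}(I))]_{n+1})$, and then uses the long exact sequence of local cohomology attached to $0\to\ov{\mathcal R'}(I)(-1)\xrightarrow{a_1t}\ov{\mathcal R'}(I)\to\mathcal{R'}(\{\ov{I^n}S\}_{n\in\Z})\to0$, restricted to degrees $\geq n+2$, whose Euler characteristic vanishes because all the modules involved are finitely graded by Theorem \ref{Itoh2}$(1)$. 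Writing $\ell([\H^i]_{n+1})=\ell(\H^i_{\geq n+1})-\ell(\H^i_{\geq n+2})$ and substituting this Euler-characteristic identity converts the degree-$(n+1)$ components over $R$ into $\sum_{i=2}^{d}(-1)^i\ell(\H^{i-1}_{N'}(\mathcal{R'}(\{\ov{I^n}S\}))_{\geq n+2})$, after which Lemma \ref{cohomology} identifies the $i=2$ term with $\sum_{k\geq n+2}\ell_R(\ov{I^kS}/\ov{I^k}S)$ and the remaining terms with the cohomology of $\ov{\mathcal R'}(IS)$. A corrected version of your ``work downstairs'' route does exist --- apply Grothendieck--Serre to $\mathcal{R'}(\{\ov{I^n}S\})$, whose Hilbert function is $\ell_R(\ov{I^n}/\ov{I^{n+1}})$ because $a_1t$ is $\ov{G}(I)$-regular, and then sum from $n+1$ to $\infty$ --- but in either version this summation/long-exact-sequence step is the actual content of the lemma, and your proposal does not supply it.
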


\begin{proof}
%\begin{asparaenum}
(1) Consider the short exact sequence
$$ 0 \longrightarrow \ov{\mathcal R'}(I)(-1) \overset{a_1t}{\longrightarrow} \ov{\mathcal R'}(I) \longrightarrow {\mathcal R'}(\{\ov{I^n}S\}_{n \in \Z}) \longrightarrow 0.$$
By using Theorem \ref{Itoh2} we get a long exact sequence of local cohomology modules
% Let us consider the long exact sequence
% $$ 
% 0 \to \H_{N'}^1({\mathcal R'}(\{\ov{I^n}S\}_{n \in \Z})) \to \H_{N'}^2(\ov{\mathcal R'}(I))(-1) \to \H_{N'}^2(\ov{\mathcal R'}(I)) \to \H_{N'}^2({\mathcal R'}(\{\ov{I^n}S\}_{n \in \Z})) $$
$$ \cdots \to \H_{N'}^{i-1}({\mathcal R'}(\{\ov{I^n}S\}_{n \in \Z})) \to \H_{N'}^i(\ov{\mathcal R'}(I))(-1) \to \H_{N'}^i(\ov{\mathcal R'}(I)) \to \H_{N'}^i({\mathcal R'}(\{\ov{I^n}S\}_{n \in \Z})) \to $$
% $$ \cdots \to \H_{N'}^{d-1}({\mathcal R'}(\{\ov{I^n}S\}_{n \in \Z})) \to \H_{N'}^d(\ov{\mathcal R'}(I))(-1) \to \H_{N'}^d(\ov{\mathcal R'}(I)) \to 0.$$
% of local cohomology modules induced by the exact sequence
% $$ 0 \to \ov{\mathcal R'}(I)(-1) \overset{a_1t}{\to} \ov{\mathcal R'}(I) \to {\mathcal R'}(\{\ov{I^n}R'\}_{n \in \Z}) \to 0$$
Hence we have 
$$ \sum_{i=2}^d(-1)^i\{\ell_R(\H_{N'}^i(\ov{\mathcal R'}(I))_{ \geq n+1})-\ell_R(\H_{N'}^i(\ov{\mathcal R'}(I))_{\geq n+2}) -\ell_R(\H_{N'}^{i-1}({\mathcal R'}(\{\ov{I^n}S\}))_{\geq n+2})\}=0 $$
for all $n \in \Z$.
Thanks to \cite[Theorem 4.1]{Bla97}, for each $n \in \Z$, we have
\begin{eqnarray*}
&& \sum_{i=0}^d(-1)^i\ov{\e}_i(I)\binom{n+d-i}{d-i}-\ell_R(R/\ov{I^{n+1}}) = \sum_{i=2}^d(-1)^i\ell_R(\H_{N'}^i(\ov{\mathcal R'}(I))_{n+1}) \\
&=& \sum_{i=2}^d(-1)^i \{ \ell_R(\H_{N'}^i(\ov{\mathcal R'}(I))_{ \geq n+1})-\ell_R(\H_{N'}^i(\ov{\mathcal R'}(I))_{\geq n+2})\}\\
&=& \sum_{i=2}^d(-1)^i  \ell_R(\H_{N'}^{i-1}({\mathcal R'}(\{\ov{I^n}S\}))_{\geq n+2}) 
=\sum_{k \geq n+2}\ell_R(\ov{I^kS}/\ov{I^k}S)-\sum_{i=2}^{d-1}(-1)^i\ell_R(\H_{N'}^i(\ov{\mathcal R'}(IS))_{\geq n+2})
\end{eqnarray*}
%$ \sum_{i=0}^d(-1)^i\ov{\e}_i(I)\binom{n+d-i}{d-i}-\ell_R(R/\ov{I^{n+1}})  
% = \sum_{i=2}^d(-1)^i\ell_R(\H_{N'}^i(\ov{\mathcal R'}(I))_{n+1}) = 
%  \sum_{i=2}^d(-1)^i \{ \ell_R(\H_{N'}^i(\ov{\mathcal R'}(I))_{ \geq n+1})-\ell_R(\H_{N'}^i(\ov{\mathcal R'}(I))_{\geq n+2})\} 
%=  \sum_{i=2}^d(-1)^i  \ell_R(\H_{N'}^{i-1}({\mathcal R'}(\{\ov{I^n}S\}))_{\geq n+2})  
%=  \sum_{k \geq n+2}\ell_R(\ov{I^kS}/\ov{I^k}S)-\sum_{i=2}^{d-1}(-1)^i\ell_R(\H_{N'}^i(\ov{\mathcal R'}(IS))_{\geq n+2})
% $
because $\H_{N'}^1(\ov{\mathcal R'}(\{ \ov{I^n}S \}))_k \cong \ov{I^kS}/\ov{I^k}S$ for all $k \in \Z$ and $\H_{N'}^i({\mathcal R'}(\{ \ov{I^n}S \})) \cong \H_{N'}^i(\ov{\mathcal R'}(IS)) $ as graded $T$-modules for $i \geq 2$ by Lemma \ref{cohomology}. (2) Follows from (1).
%\end{asparaenum}  
\end{proof}

% \begin{lem}
%  \label{lemma:e2almostminimal}
%  Suppose $d=3$ and $\ov{\e}_2(I)=\ov{\e}_1(I)-\ov{\e}_0(I)+\ell_R(R/\ov{I})+1.$ Then 
%  \[
% \ov{\e}_3(I)= 1-\sum_{n \geq 2}\ell_R(\ov{I^nS}/\ov{I^n}S)  
%  \]
% \end{lem}
% \begin{proof}
%  
% \end{proof}

The following result plays a key role for our proof of Theorem \ref{maintheorem1}.

\begin{thm}\label{key}
%Let $(R,\m)$ be an analytically unramified Cohen-Macaulay local ring of dimension $d \geq 2$ and $I$ an $\m$-primary ideal in $R$. 
Suppose  $d \geq 2$.
Assume  $\ov{\e}_2(I)=\ov{\e}_1(I)-\ov{\e}_0(I)+\ell_R(R/\ov{I})+1$ and $\ov{\e}_3(I) \neq 0$ (if $d \geq 3$), then  $\ell_R(\ov{I^3}/J\ov{I^2})=1$ and $\ov{I^{n+1}}=J\ov{I^n}$   for all $n \geq 3$.
\end{thm}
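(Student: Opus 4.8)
The plan is to reduce the whole statement to a computation of the Hilbert series of $\ov{C}=\ov{C}_J(I)=C^{(2)}_J(\{\ov{I^n}\})$, and then to induct on $d$. First observe $\ov{C}\neq(0)$: otherwise Proposition \ref{C_2} gives $\ov{\e}_1(I)=\ov{\e}_0(I)-\ell_R(R/\ov{I})+\ell_R(\ov{I^2}/J\ov{I})$ and $\ov{\e}_2(I)=\ell_R(\ov{I^2}/J\ov{I})$, forcing $\ov{\e}_2(I)=\ov{g}_s(I)$ and contradicting the hypothesis. Hence $\dim_T\ov{C}=d$ by Proposition \ref{C_2}\eqref{C_2(1)}, and since $[\ov{C}]_n=(0)$ for $n\le 1$ we may write $\mathrm{HS}_{\ov{C}}(z)=Q(z)/(1-z)^d$ with $Q(z)=\sum_{j\ge 2}q_j z^j$, $q_j\in\Z$ and $Q(1)>0$. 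Writing the Hilbert polynomial of $\ov{C}$ in the form $\sum_{i=0}^{d-1}(-1)^i s_i\binom{n+d-1-i}{d-1-i}$, one has $s_0=Q(1)$ and $s_1=Q'(1)$; comparing coefficients in Proposition \ref{C_2}\eqref{C_2(2)} with the normal Hilbert polynomial yields $\ov{g}_s(I)=\ell_R(\ov{I^2}/J\ov{I})+s_0$ and $\ov{\e}_2(I)=\ell_R(\ov{I^2}/J\ov{I})+s_1$. Thus the hypothesis $\ov{\e}_2(I)=\ov{g}_s(I)+1$ is equivalent to the single identity
\[
Q'(1)-Q(1)=\sum_{j\ge 2}(j-1)q_j=1.
\]
Since $\ell_R(\ov{I^3}/J\ov{I^2})=\ell_R([\ov{C}]_2)=q_2$, and since by Lemma \ref{fact}(3) the equality $\ov{I^{n+1}}=J\ov{I^n}$ for all $n\ge 3$ is equivalent to $\ov{C}=[\ov{C}]_2T$ (i.e.\ $C^{(3)}=(0)$), the theorem amounts to proving $q_2=1$ together with generation of $\ov{C}$ in degree $2$.

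For the base case $d=2$, Proposition \ref{S2} says $\ov{C}$ is $(S_2)$ over $T/\Ann_T\ov{C}$; as $\dim_T\ov{C}=2$ this forces $\depth_T\ov{C}=2$, so $\ov{C}$ is Cohen--Macaulay. Cutting by a regular sequence of two general linear forms of $T$ (available since $R/\m$ is infinite) produces an Artinian module whose Hilbert series is exactly $Q(z)$, so every $q_j$ is a nonnegative integer. Combined with the identity above and $q_j=0$ for $j<2$, the relation $\sum_{j\ge 2}(j-1)q_j=1$ forces $q_2=1$ and $q_j=0$ for $j\ge 3$, that is $Q(z)=z^2$. In particular $\ell_R(\ov{I^3}/J\ov{I^2})=q_2=1$, and the Artinian quotient is $k$ concentrated in degree $2$, so $\ov{C}$ is minimally generated by one element in degree $2$; hence $\ov{C}=[\ov{C}]_2T$ and Lemma \ref{fact}(3) gives $\ov{I^{n+1}}=J\ov{I^n}$ for $n\ge 3$. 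This finishes the case $d=2$ without using $\ov{\e}_3(I)\neq 0$.

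For the inductive step $d\ge 3$, I would descend to $S=R/(a_1)$ with $a_1$ chosen as in \cite[Theorem 1]{Itoh92}, so that $a_1t$ is $\ov{G}(I)$-regular and $\ov{I^nS}=\ov{I^n}S$ for $n\le 0$ and for $n\gg 0$. The goal is to verify that $IS$ in the $(d-1)$-dimensional ring $S$ again satisfies $\ov{\e}_2(IS)=\ov{g}_s(IS)+1$ and (if $d-1\ge 3$) $\ov{\e}_3(IS)\neq 0$, to apply the inductive hypothesis to obtain $\ell_S(\ov{I^3S}/JS\,\ov{I^2S})=1$ and $\ov{I^{n+1}S}=JS\,\ov{I^nS}$ for $n\ge 3$, and then to lift these statements to $R$ using that $a_1t$ is a nonzerodivisor on $\ov{G}(I)$. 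The transfer of numerical data is governed by Lemma \ref{rem}, while Lemma \ref{cohomology} and Proposition \ref{sally} allow one to pass between the filtration $\{\ov{I^n}S\}$ and the genuine normal filtration $\{\ov{I^nS}\}$ of $IS$.

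I expect the main obstacle to be precisely the normality defect $\ov{I^nS}\neq\ov{I^n}S$: the inductive conclusion for $S$ concerns $\{\ov{I^nS}\}$, whereas what descends cleanly from $R$ is $\{\ov{I^n}S\}$, and lifting the reduction-number statement requires these filtrations to agree in \emph{every} degree, not merely for $n\gg 0$. This is exactly where $\ov{\e}_3(I)\neq 0$ is decisive: by Lemma \ref{rem} the total defect $\sum_{n\ge 2}\ell_R(\ov{I^nS}/\ov{I^n}S)$ and the finite-length module $\H_{N'}^2(\ov{\mathcal R'}(IS))$ are tied to $\ov{\e}_3(I)$ and to $\ov{\e}_2(I)-\ov{g}_s(I)=1$, and forcing the defect to vanish (equivalently $\H^1_{M'}({\mathcal R'}(\{\ov{I^n}S\}))=(0)$, as in the proof of Proposition \ref{sally}) is the crux; once $\ov{\mathcal R'}(IS)={\mathcal R'}(\{\ov{I^n}S\})$, the descent--lift argument closes. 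Should the numerical transfer prove too delicate, the fallback is to run the $(S_2)$/regular-sequence computation of the base case in all dimensions: cutting $\ov{C}$ by two general linear forms of $T$ (its depth is $\ge 2$ by Proposition \ref{S2}) yields positivity constraints on the $q_j$ which, together with the identity $\sum_{j\ge 2}(j-1)q_j=1$ and the nonvanishing $\ov{\e}_3(I)=s_2\neq 0$ controlling $Q''(1)$, are designed to pin down $Q(z)=z^2$.
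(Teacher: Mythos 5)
Your reduction of the statement to the identity $\sum_{j\ge 2}(j-1)q_j=1$ for the numerator of $\mathrm{HS}_{\ov{C}}(z)$ is correct, and your base case $d=2$ is a valid argument that genuinely differs from the paper's: the paper instead invokes the Huckaba--Marley formulas $\ov{\e}_1(I)=\sum_{n\ge 0}\ell_R(\ov{I^{n+1}}/J\ov{I^n})$ and $\ov{\e}_2(I)=\sum_{n\ge 1}n\,\ell_R(\ov{I^{n+1}}/J\ov{I^n})$ (valid since $\depth\ \ov{\rmG}(I)>0$) to get $\sum_{n\ge 2}(n-1)\ell_R(\ov{I^{n+1}}/J\ov{I^n})=1$ directly, which is the same positivity-plus-identity mechanism you implement via the $(S_2)$/Cohen--Macaulay property of $\ov{C}$ and an Artinian reduction. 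Either route closes the case $d=2$.

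The inductive step, however, is not a proof but a plan, and the decisive step is missing. You correctly identify that after applying the induction hypothesis to $IS$ everything reduces to showing $\ov{I^nS}=\ov{I^n}S$ for all $n$, and that $\ov{\e}_3(I)\neq 0$ must force this. But for $d=3$, Lemma \ref{rem}(2) gives
\[
\ov{\e}_3(I)=1-\sum_{n\ge 2}\ell_R(\ov{I^nS}/\ov{I^n}S)+\ell_R\bigl(\H_{N'}^2(\ov{\mathcal R'}(IS))_{\ge 2}\bigr),
\]
and the two correction terms have opposite signs: positivity of $\ov{\e}_3(I)$ alone does \emph{not} force the normality defect to vanish unless one first proves $[\H_{N'}^2(\ov{\mathcal R'}(IS))]_n=(0)$ for $n\ge 2$. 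That vanishing is exactly what the paper extracts from the inductive conclusion: $\ov{I^{n+1}S}=J\ov{I^nS}$ for $n\ge 3$ bounds the reduction number of $IS$ by $3$, hence ${\rm a}_2(\ov{\rmG}(IS))\le 1$ by \cite[Proposition 3.2]{HZ94}, and then the surjections $\H_{N'}^2(\ov{\mathcal R'}(IS))_{n+1}\to\H_{N'}^2(\ov{\mathcal R'}(IS))_n\to\H_{N'}^2(\ov{\rmG}(IS))_n\to 0$ together with the finite gradedness of $\H_{N'}^2(\ov{\mathcal R'}(IS))$ kill all degrees $\ge 2$ by descending induction. None of this appears in your proposal. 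Likewise, for $d\ge 4$ the paper does not argue numerically at all: it uses \cite[Theorem 3.1]{MOR17} to get $\depth\ \ov{\rmG}(IS)\ge d-2\ge 2$ and then Proposition \ref{sally} and Lemma \ref{cohomology} to conclude $\ov{I^nS}=\ov{I^n}S$; your sketch does not carry this out. Finally, your proposed fallback fails: for $d\ge 3$ Proposition \ref{S2} only yields $\depth_T\ov{C}\ge 2$, not Cohen--Macaulayness, so cutting by $d$ general linear forms does not produce an Artinian module with nonnegative $h$-vector, and the positivity constraints on the $q_j$ that your argument needs are unavailable.
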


\begin{proof}
We prove the required equalities by induction on $d$. 
Suppose that $d=2$.
Then since $\depth ~\ov{G}(I) > 0,$ using \cite[Proposition 4.6]{HM97} we get $\ov{\e}_1(I)=\sum_{n \geq 0}\ell_R(\ov{I^{n+1}}/J\ov{I^n})$ and $\ov{\e}_2(I)=\sum_{n \geq 1}n \ell_R(\ov{I^{n+1}}/J\ov{I^n})$.
Therefore
$$1= \ov{\e}_2(I)-\ov{\e}_1(I)+\ov{\e}_0(I)-\ell_R(R/\ov{I})=\sum_{n \geq 2}(n-1)\ell_R(\ov{I^{n+1}}/J\ov{I^n}).$$
This implies that $\ell_R(\ov{I^{3}}/J \ov{I^2})=1$ and $\ov{I^{n+1}}=J \ov{I^n}$ for all $n \geq 3$ as required.

Suppose that $d \geq 3$ and that our assertion holds true for $d-1$.
We then have $\ov{\e}_2(IS)=\ov{\e}_1(IS)-\ov{\e}_0(IS)+\ell_{S}(S/\ov{IS})+1$, and $\ov{\e}_3(IS) \neq 0$ if $d \geq 4$, by \cite[Theorem 1]{Itoh92}.
The hypothesis of induction on $d$ implies that we have $\ell_R(\ov{I^3S}/J\ov{I^2S})=1$ and $\ov{I^{n+1}S}=J\ov{I^nS}$ for all $n \geq 3$. Hence to prove our assertion it is enough to show that $\ov{I^{n}S}=\ov{I^n}S$ holds true for all $n \in \Z$.

Suppose $d \geq 4$. 
By using \cite[Theorem 3.1]{MOR17}, we have $\depth ~{\ov{\rmG}}(IS) \geq (d-1)-1=d-2 \geq 2.$ 
Then $\depth~ \ov{G}(I) \geq 3$ by Proposition \ref{sally}.
Hence $\depth ~{\mathcal R'}(\{\ov{I^n}S\}_{n \in \Z})=\depth ~ \ov{\mathcal{R'}}(I)-1 \geq 3$.
Therefore $\ov{I^nS}=\ov{I^n}S$ for all $n \in \Z$ by Lemma \ref{cohomology} as required.

\vskip 3mm

%%%%%%%%%%%%%%%%%%%%%%%%%%%         d=3           %%%%%%%%%%%%%%%%%%%%%%%%%%%%%%

Suppose that $d=3$.
Consider the exact sequence
$$\cdots \longrightarrow \H_{N'}^2(\ov{\mathcal{R'}}(IS))(1) \overset{t^{-1}}{\longrightarrow} \H_{N'}^2(\ov{\mathcal{R'}}(IS)) \longrightarrow \H_{N'}^2(\ov{G}(IS)) \longrightarrow 0 $$
of local cohomology modules which is induced by the canonical exact sequence
$$ 0 \longrightarrow \ov{\mathcal{R'}}(IS)(1) \overset{t^{-1}}{\longrightarrow} \ov{\mathcal{R'}}(IS) \longrightarrow \ov{G}(IS) \longrightarrow 0.$$
Since $\ov{I^{n+1} S}=J\ov{I^n S}$ for all $n \geq 3$, we get ${\rm{a}_2}(\ov{G}(IS))+2 \leq \ov{\rm{r}}_{JS}(IS) \leq 3$ by \cite[Proposition 3.2]{HZ94} $($also \cite[Proposition 3.2]{Tru87}$)$ so that $[\H_{N'}^2(\ov{G}(IS))]_n=(0)$ for all $n \geq 2$ where ${\rm{a}_2}(\ov{G}(IS)):=\sup\{n \in \Z \ | \ [\H_{N'}^2(\ov{G}(IS))]_n \neq (0) \}$ denotes the ${\rm{a}}$-invariant of $\ov{G}(IS)$.
Hence $\H_{N'}^2(\ov{\mathcal{R'}}(IS))_n=(0)$ for all $n \geq 2$.
Therefore we have
\begin{eqnarray*}
\ov{\e}_3(I) &=& \ov{\e}_0(I)-\ov{\e}_1(I)+\ov{\e}_2(I)-\ell_R(R/\ov{I})-\sum_{n \geq 2}\ell_R(\ov{I^nS}/\ov{I^n}S) \nonumber
= 1-\sum_{n \geq 2}\ell_R(\ov{I^nS}/\ov{I^n}S) \label{Eqn:e3}.
\end{eqnarray*}
by Lemma \ref{rem}(2).
Therefore, because $\ov{\e}_3(I) >0$ by our assumption and \cite[Theorem 3]{Itoh92}, we get $\ov{I^nS}=\ov{I^n}S$ for all $n \in \Z$ as required.
\end{proof}

Let $B:=T/\m T \cong (R/\m)[X_1,X_2,\cdots,X_d]$ the polynomial ring with $d$ indeterminates over the field $R/\m$.

Now we give a complete structure of the Sally module and we describe the Hilbert series of the associated graded ring in the case $\overline{\e}_2(I)=\ov{\e}_1(I)-\ov{\e}_0(I)+\ell_R(R/\ov{I})+1$ and $\ov{\e}_3(I) \neq 0.$ 
% In particular, we prove that $\depth ~\ov{G}(I) \geq d-1$ and $\ov{\rm{r}}_J(I) \leq 2.$ 
% now in a position to give a main theorem of this section. \textcolor{red}{write}

\begin{thm}\label{maintheorem1}
%Let $(R,\m)$ be an analytically unramified Cohen-Macaulay local ring of dimension $d \geq 2$ and $I$ an $\m$-primary ideal in $R$. 
Suppose that $d \geq 2$.
Then following statements are equivalent:
\begin{enumerate}
\item[$(1)$] $\overline{\e}_2(I)=\ov{\e}_1(I)-\ov{\e}_0(I)+\ell_R(R/\ov{I})+1$ and, if $d \geq 3, $ $\ov{\e}_3(I) \neq 0$,
\item[$(2)$] $\ov{\e}_2(I)=\ell_R(\ov{I^2}/J\ov{I})+2$,
\item[$(3)$] $\ov{C}_J(I) \cong B(-2)$ as graded $T$-modules, and
\item[$(4)$] $\ell_R(\ov{I^{3}}/J \ov{I^2})=1$ and $\ov{I^{n+1}}=J \ov{I^n}$ for all $n \geq 3$. 
 
\end{enumerate}
In this case, the following assertions follow:
\begin{itemize}
\item[$(i)$] $\ov{\e}_1(I)=\ov{\e}_0(I)-\ell_R(R/\ov{I})+\ell_R(\ov{I^2}/J\ov{I})+1$.
\item[$(ii)$] $\ov{\e}_3(I)=1$ if $d \geq 3$, and $\ov{\e}_i(I)=0$ for $4 \leq i \leq d$.
\item[$(iii)$] $HS_{\ov{G}(I)}(z)=\frac{\ell_R(R/\ov{I})+(\e_0(I)-\ell_R(R/\ov{I})-\ell_R(\ov{I^2}/J \ov{I}))z+(\ell_R(\ov{I^2}/J \ov{I})-1)z^2+z^{3}}{(1-z)^d}.$ 
\item[$(iv)$] $\depth ~\ov{G}(I) \geq d-1$, and $\ov{G}(I)$ is Cohen-Macaulay if and only if $\ov{I^3} \nsubseteq J.$ 
\end{itemize}
\end{thm}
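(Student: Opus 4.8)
The plan is to prove the cycle $(1)\Rightarrow(4)\Rightarrow(3)\Rightarrow(1)$ together with $(2)\Leftrightarrow(3)$, and then to read off the four consequences from the identification $\ov{C}\cong B(-2)$. The deep input is Theorem \ref{key}: under hypothesis $(1)$ it yields $\ell_R(\ov{I^3}/J\ov{I^2})=1$ and $\ov{I^{n+1}}=J\ov{I^n}$ for all $n\ge 3$, which is exactly $(4)$. All remaining implications are a translation between the reduction-number data $(4)$, the module statement $(3)$, and the numerical statement $(2)$, for which Proposition \ref{C_2} is the dictionary between $\ov{C}$ and the Hilbert series of $\ov{G}(I)$.

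The structural heart is the passage to $(3)$. For $(4)\Rightarrow(3)$, the equalities $\ov{I^{n+1}}=J\ov{I^n}$ $(n\ge 3)$ give $\ov{C}=[\ov{C}]_2\,T$ by Lemma \ref{fact}(3), while $[\ov{C}]_2=\ov{I^3}/J\ov{I^2}$ has length one, hence is $\cong R/\m$; thus there is a graded surjection $B(-2)\twoheadrightarrow\ov{C}$. Since $\ov{C}\neq 0$, Proposition \ref{C_2}(1) gives $\dim_T\ov{C}=d$, and a nonzero kernel inside the $d$-dimensional domain $B(-2)$ would force $\dim_T\ov{C}\le d-1$; hence the surjection is an isomorphism. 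For $(2)\Rightarrow(3)$ I would first locate the multiplicity of $\ov{C}$. By Proposition \ref{C_2}(3), $\ov{\e}_1-\ov{\e}_0+\ell_R(R/\ov{I})=\ell_R(\ov{I^2}/J\ov{I})+\ell_{T_{\p}}(\ov{C}_{\p})$, so Itoh's inequalities $\ov{\e}_2\ge \ov{\e}_1-\ov{\e}_0+\ell_R(R/\ov{I})\ge \ell_R(\ov{I^2}/J\ov{I})$ from \cite{Itoh92}, combined with $(2)$, squeeze $\ell_{T_{\p}}(\ov{C}_{\p})$ into $\{0,1,2\}$; the values $0$ and $2$ both force $\ov{C}=0$ (using $\Ass_T\ov{C}\subseteq\{\p\}$, Lemma \ref{fact}(2) and the equality case of \cite{Itoh92}), contradicting $(2)$, so $\ell_{T_{\p}}(\ov{C}_{\p})=1$. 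Then $\ov{C}_{\p}\cong\kappa(\p)$ shows $\p\ov{C}=0$, so $\ov{C}$ is a torsion-free rank-one $B$-module; being $(S_2)$ over $B$ by Proposition \ref{S2} it is reflexive, and a reflexive rank-one module over the regular domain $B$ is free, whence $\ov{C}\cong B(-2)$.

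Once $\ov{C}\cong B(-2)$ is in hand, the numerical statements follow by bookkeeping. We have $\mathrm{HS}_{\ov{C}}(z)=z^2/(1-z)^d$, and substituting into Proposition \ref{C_2}(4) gives exactly the series in $(iii)$, with $h$-polynomial $h(z)=\ell_R(R/\ov{I})+(\ov{\e}_0-\ell_R(R/\ov{I})-\ell_R(\ov{I^2}/J\ov{I}))z+(\ell_R(\ov{I^2}/J\ov{I})-1)z^2+z^3$. Reading off $\ov{\e}_i=\tfrac1{i!}h^{(i)}(1)$ produces $(i)$, the equality $\ov{\e}_2=\ell_R(\ov{I^2}/J\ov{I})+2$ of $(2)$, and $(ii)$ (namely $\ov{\e}_3=1$ and $\ov{\e}_i=0$ for $i\ge 4$); comparing $(i)$ and $(2)$ gives the first equality of $(1)$, and $\ov{\e}_3=1\neq 0$ supplies the rest of $(1)$ for $d\ge3$. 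This establishes $(3)\Rightarrow(2)$ and $(3)\Rightarrow(1)$, closing both the cycle and $(2)\Leftrightarrow(3)$.

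It remains to prove $(iv)$, which I expect to be the main obstacle. The bound $\depth\,\ov{G}(I)\ge d-1$ is immediate from Proposition \ref{C_2}(5), since $\ov{C}\cong B(-2)$ is a free, hence Cohen-Macaulay, module. For the criterion, note that in case $(1)$ we have $\ov{\e}_3\ne0$, and the proof of Theorem \ref{key} shows $\ov{I^nS}=\ov{I^n}S$ for all $n$; hence reduction modulo $a_1$ preserves normality and $\ov{I^3}\subseteq J$ is equivalent to $\ov{I^3S}\subseteq JS$. Using the Sally machine (Proposition \ref{sally}) together with Lemma \ref{cohomology}, I would descend the Cohen-Macaulayness of $\ov{G}(I)$ to that of $\ov{G}(IS)$ and, inductively, to the base case $d=2$. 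There $\ov{G}(I)$ is Cohen-Macaulay if and only if $\H_{M'}^2(\ov{\mathcal{R}'}(I))=0$, because Theorem \ref{Itoh2} already gives $\H_{M'}^0(\ov{\mathcal{R}'}(I))=\H_{M'}^1(\ov{\mathcal{R}'}(I))=0$. The hard part is the explicit analysis of this top cohomology in dimension two: one must show that its unique nonzero graded component — in the degree dictated by $\ov{\rmr}_J(I)=3$ — is nonzero exactly when the extra generator $x$ of $\ov{I^3}=J\ov{I^2}+(x)$ lies in $J$, equivalently that $a_2t$ fails to be a nonzerodivisor on $\ov{G}(I)/(a_1t)$ precisely when $\ov{I^3}\subseteq J$. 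This socle-level computation, controlling when the almost-minimal case is genuinely Cohen-Macaulay, is where the real work of $(iv)$ lies.
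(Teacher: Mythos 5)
Your architecture is sound, and for the four equivalences and assertions (i)--(iii) you are in effect reproving, in this special case, the structure theorem that the paper simply cites: the paper's entire proof is Theorem \ref{key} for $(1)\Rightarrow(4)$, an appeal to \cite[Theorem 3.1]{MOR17} for $(3)\Leftrightarrow(4)$, $(4)\Rightarrow(2)$ and all of (i)--(iv), and a short Itoh-inequality computation for $(2)\Rightarrow(1)$ whose substance you reproduce. Your direct arguments are correct where you give them: $(4)\Rightarrow(3)$ by cyclic generation in degree $2$ plus a dimension count over the domain $B$ using Proposition \ref{C_2}(1), and $(2)\Rightarrow(3)$ by forcing $\ell_{T_{\p}}(\ov{C}_{\p})=1$ and then using torsion-free of rank one $+$ $(S_2)$ $\Rightarrow$ reflexive $\Rightarrow$ free over the UFD $B$. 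This is a legitimate, more self-contained alternative to citing [MOR17]. One repair is needed in $(2)\Rightarrow(3)$: the reflexivity argument only yields $\ov{C}\cong B(-m)$ for some $m\geq 2$ (the initial degree of $\ov{C}$), and you must still feed the hypothesis $(2)$ back in --- via Proposition \ref{C_2}(4), which gives $\ov{\e}_2(I)=\ell_R(\ov{I^2}/J\ov{I})+m$ --- to conclude $m=2$; as written you jump straight to $B(-2)$.

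The genuine gap is assertion (iv). The inequality $\depth\ \ov{G}(I)\geq d-1$ does follow from Proposition \ref{C_2}(5) as you say, but for the criterion ``$\ov{G}(I)$ is Cohen--Macaulay if and only if $\ov{I^3}\nsubseteq J$'' you only sketch a reduction to $d=2$ and then declare the decisive cohomology computation to be ``where the real work lies'' without performing it. Nothing earlier in your argument ties Cohen--Macaulayness of $\ov{G}(I)$ to the containment $\ov{I^3}\subseteq J$, so this is not a formality that can be waved through; the paper disposes of it only by citing \cite[Theorem 3.1]{MOR17}. Since you have chosen not to use that reference, you must close this yourself. A cleaner route than the one you propose is available from what you have already established: $\ov{C}\cong B(-2)$ gives $\m\ov{I^3}\subseteq J\ov{I^2}$ (Lemma \ref{fact}(1)) and $\ov{I^3}=(x)+J\ov{I^2}$ with $\ell_R(\ov{I^3}/J\ov{I^2})=1$, and the Valabrega--Valla-type criterion for filtrations (see \cite{RV10}) reduces Cohen--Macaulayness of $\ov{G}(I)$ to $\ov{I^{n+1}}\cap J=J\ov{I^n}$ for all $n$, of which only $n=2$ is in question; that single condition fails exactly when $x\in J$, i.e.\ when $\ov{I^3}\subseteq J$. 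Without some such argument, (iv) remains unproved in your write-up.
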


\begin{proof}
$(1) \Rightarrow (4)$ follows from Theorem \ref{key}. 
Thanks to \cite[Theorem 3.1]{MOR17}, $(3) \Leftrightarrow (4)$, $(4) \Rightarrow (2)$, and the last assertions $(i) \sim (iv)$ follow. %Hence it suffices to prove $(2) \Rightarrow (1).$ 

\noindent 
$(2) \Rightarrow (1):$ By \cite[Theorem 2]{Itoh92} and the assumption we have the inequalities
$$ \ell_R(\ov{I^2}/J\ov{I})+2 =\ov{\e}_2(I) \geq \ov{\e}_1(I)-\ov{\e}_0(I)+\ell_R(R/\ov{I}) \geq \ell_R(\ov{I^2}/J\ov{I}) $$
and the equality $\ov{\e}_2(I) = \ov{\e}_1(I)-\ov{\e}_0(I)+\ell_R(R/\ov{I})$ is true if and only if the equality $\ov{\e}_1(I)=\ov{\e}_0(I)-\ell_R(R/\ov{I})+\ell_R(\ov{I^2}/J\ov{I})$ is true.
Hence we have $\ov{\e}_1(I)=\ov{\e}_0(I)-\ell_{R}(R/\ov{I})+\ell_{R}(\ov{I^2}/J\ov{I})+1$ and $\ov{\e}_2(I)=\ov{\e}_1(I)-\ov{\e}_0(I)+\ell_R(R/\ov{I})+1$.
Then, thanks to \cite[Theorem 3.1]{MOR17}, we get $\ov{\e}_3(I)=\binom{m}{2} \neq 0$ for some $m \geq 2$. 
%Then, thanks to \cite[Theorem 3.1]{MOR17}, we have $\ov{C} \cong B(-m)$ for some $m \geq 2$ and $\ov{\e}_2(I)=\ell_R(\ov{I^2}/J\ov{I})+m$. Therefore by our assumption $m=2.$ Hence again using \cite[Theorem 3.1]{MOR17}, we get $\ov{\e}_3(I)=1 \neq 0.$ 
% When this is the case, we have
% $$m=\ov{\e}_2(I)-\ell_R(\ov{I^2}/J\ov{I})=\ov{\e}_2(I)-\ov{\e}_1(I)+\ov{\e}_0(I)-\ell_R(R/\ov{I})+1=2.$$
% Thus the result follows.
\end{proof}

By the above result we notice that if 
  $d \geq 3$ and   $\ov{\e}_2(I) \leq \ov{\e}_1(I)-\ov{\e}_0(I)+\ell_R(R/\ov{I})+1, $     then $\ov{\e}_3(I) \leq 1$.
 
\vskip 2mm
%\begin{rem}
%In \cite{Itoh92} Itoh conjectured that if $R$ is Gorenstein of dimension $d \geq 3$ and $\ov{\e}_3(I)=0,$ then $\ov{\rm{r}}(I) \leq 2$. Therefore the assumption $\ov{\e}_3(I) \neq 0$ can be removed in Theorem \ref{maintheorem1} if $R$ is Gorenstein and   Itoh's conjecture is true. In particular 
%  Itoh's conjecture is true for $\ov{I}=\m$ in a Gorenstein local ring  by \cite[Theorem 3(2)]{Itoh92}  and more generally in a Cohen-Macaulay local ring $R$ with $ \type(R)\leq \ell_R(\ov{\m^2}/J \m) + 2$  by  \cite{CPR16},\cite{KM15}.  
%\end{rem} 
 
The next result follows immediately from Theorem \ref{maintheorem1} and \cite[Corollary 4.1]{MOR17}. 
Recall that an ideal $I$ is said to be normal if $\ov{I^n}=I^n$ for all $n \geq 0.$
We set $\e_i(I)=\e_i(\{I^n\}_{n \in \mathbb{Z}})$ denotes the $i$-th Hilbert coefficient of the $I$-adic filtration $\{I^n\}_{n \in \Z}$.

\begin{cor}\label{normal}
Assume $d \geq 2$ and $I$ is normal.
Then the following conditions are equivalent:
\begin{itemize}
 \item[$(1)$] $\e_1(I)=\e_0(I)-\ell_R(R/I)+\ell_R(I^2/JI)+1;$
 \item[$(2)$] $\e_2(I)=\e_1(I)-\e_0(I)+\ell_R(R/I)+1$ and, if $d \geq 3,$ ${\e}_3(I) \neq 0;$
% \item[$(3)$] $\e_2(I)=\ell_R(I^2/JI)+2;$
 \item[$(3)$] ${C^{(2)}_J(\{I^n\}_{n \in \mathbb{Z}})} \cong B(-2)$ as graded $T$-modules;
 \item[$(4)$] $\ell_R(I^{3}/JI^2)=1$ and $I^4=JI^3$.
\end{itemize}
When this is the case, $\depth ~G(I) \geq d-1$, and $G(I)$ is Cohen-Macaulay if and only if $I^3 \nsubseteq J$.
\end{cor}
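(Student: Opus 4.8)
The plan is to deduce the corollary from Theorem \ref{maintheorem1} by using normality to identify two filtrations. First I would record that if $I$ is normal then $\ov{I^n}=I^n$ for every $n\ge 0$, so the normal filtration $\{\ov{I^n}\}_{n\in\Z}$ and the $I$-adic filtration $\{I^n\}_{n\in\Z}$ are literally the same filtration. Through this identification every ``bar'' object of Theorem \ref{maintheorem1} becomes its ordinary counterpart: $\ov{\e}_i(I)=\e_i(I)$ for all $i$, $\ov{G}(I)=G(I)$, $\ov{C}_J(I)=C^{(2)}_J(\{I^n\}_{n\in\Z})$, and $\ov{I^2}/J\ov{I}=I^2/JI$. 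Since the hypothesis $J\cap I^2=JI$ needed to run Proposition \ref{C_2} holds for the normal filtration, Theorem \ref{maintheorem1} applies verbatim to the $I$-adic filtration, and its conditions $(1)$, $(3)$, $(4)$ translate directly into conditions $(2)$, $(3)$, $(4)$ of the corollary.

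The one discrepancy to reconcile is that condition $(4)$ of the corollary requires merely $I^4=JI^3$, while Theorem \ref{maintheorem1}$(4)$ produces $\ov{I^{n+1}}=J\ov{I^n}$ for all $n\ge 3$. For the $I$-adic filtration these are equivalent: if $I^{n+1}=JI^n$, then multiplying by $I$ gives $I^{n+2}=I\cdot JI^n=JI^{n+1}$, so $I^4=JI^3$ propagates to all $n\ge 3$ by induction. This self-improving behaviour---precisely what fails for a general normal filtration, as noted in the introduction---is where the normality of $I$, not merely of the filtration, does its work, and it is exactly the passage recorded in \cite[Corollary 4.1]{MOR17}, which I would cite for this step. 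Reading assertion $(iv)$ of Theorem \ref{maintheorem1} through the same dictionary (with $\ov{G}(I)=G(I)$ and $\ov{I^3}=I^3$) then yields the closing statement on $\depth~G(I)$ and Cohen--Macaulayness. I expect this implication $I^4=JI^3\Rightarrow I^{n+1}=JI^n$ for all $n\ge 3$ to be the only genuinely substantive point; everything else is transcription.

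It remains to fold condition $(1)$ into the equivalence. For this I would invoke the length formula $\e_1=\e_0-\ell_R(R/I)+\ell_R(I^2/JI)+\ell_{T_\p}(C^{(2)}_\p)$ of Proposition \ref{C_2}(3), with $\p=\m T$, which shows that condition $(1)$ is equivalent to $\ell_{T_\p}(C^{(2)}_\p)=1$, that is, to $C^{(2)}=\ov{C}_J(I)$ having rank one at $\p$. Because $C^{(2)}$ is concentrated in degrees $\ge 2$ and satisfies Serre's property $(S_2)$ over $T/\Ann_T C^{(2)}$ by Proposition \ref{S2}, this rank-one condition is exactly enough to force $C^{(2)}\cong B(-2)$---again the content of \cite[Corollary 4.1]{MOR17}---which is condition $(3)$, the converse reading of the length formula being immediate. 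The resulting equivalence $(1)\Leftrightarrow(3)$, combined with the chain $(2)\Leftrightarrow(3)\Leftrightarrow(4)$ inherited from Theorem \ref{maintheorem1}, closes the loop among $(1)$--$(4)$ and finishes the proof.
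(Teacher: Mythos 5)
Your proposal is correct and follows essentially the same route as the paper, which simply derives the corollary from Theorem \ref{maintheorem1} (via the identification $\ov{I^n}=I^n$ for normal $I$) together with \cite[Corollary 4.1]{MOR17}. Your write-up merely makes explicit the two points the paper leaves to that citation --- the propagation $I^4=JI^3\Rightarrow I^{n+1}=JI^n$ for $n\ge 3$ in the adic case, and the passage from $\ell_{T_\p}(C^{(2)}_\p)=1$ to $C^{(2)}\cong B(-2)$ via Propositions \ref{C_2} and \ref{S2} --- and both are handled correctly.
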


The following example, due to Huckaba and Huneke \cite[Theorem 3.12]{HH99}, shows  that if $I$ is normal,  $\e_2(I)=\e_1(I)-\e_0(I)+\ell_R(R/I)+1$ and $\e_3(I) \neq 0,$ then $G(I)$ need not be Cohen-Macaulay and hence Theorem \ref{maintheorem1} is sharp. 

\begin{ex}\label{example} 
 Let $K$ be a field of characteristic $\not= 3$ and set $R =
K[\![X,Y,Z]\!]$, where $X,Y,Z$ are indeterminates. Let $N = (X^4,
X(Y^3+Z^3), Y(Y^3+Z^3), Z(Y^3+Z^3))$ and set $I = N + {\mathfrak
m}^5$, where ${\mathfrak m}$ is the maximal ideal of $R$. The
ideal $I$ is a normal ${\mathfrak m}$-primary ideal whose
associated graded  ring ${G}(I)$ has depth $d-1=2. $    Moreover,
$$HS_{{G}(I)}(t)=\frac{31 + 43t +t^2 + t^3}{(1-t)^3},$$
 and hence $\ell_R(R/I)=31,~\e_0(I)=76,~ \e_1(I)=48,~ \e_2(I)=4,~\e_3(I)=1$. Thus $\e_2(I)=\e_1(I)-\e_0(I)+\ell_R(R/I)+1.$ For the computations see \cite[Example 3.2]{CPR05}.
\end{ex}

\section{The structure of the Sally module when $\ov{\e}_3(I) = 0$}

In this section we consider the case $\overline{\e}_2(I)=\ov{\e}_1(I)-\ov{\e}_0(I)+\ell_R(R/\ov{I})+1$ and $\ov{\e}_3(I) = 0$ in three dimensional case. 
% Next theorem is  the main result of this section. 
This case faces the difficult problem stated by Itoh in \cite{Itoh92} on the vanishing of $\ov{\e}_3(I)$ which asserts that  
if $\ov{\e}_3(I)=0$ and $R$ is Gorenstein, then $\ov{G}(I)$ is Cohen-Macaulay or equivalently 
  $\overline{\e}_2(I)=\ov{\e}_1(I)-\ov{\e}_0(I)+\ell_R(R/\ov{I}).$ Hence for the class  of ideals verifying Itoh's conjecture the assumptions of this section doesn't occur. This is the case for instance when $\ov{I}=\m$ and $R$ is Gorenstein,  see\cite[Theorem 3(2)]{Itoh92} (more generally, $R$ satisfying $\ell_R(\ov{I^2}/J\ov{I}) \geq \type(R)-2, $ see \cite{CPR16,KM15}).
If $R$ is not Gorenstein or $R $ is Gorenstein and $\ov{I} \neq \mathfrak m, $ our analysis can be useful for proving or disproving Itoh's conjecture, also because the doubt of the validity of Itoh's conjecture is growing among the experts. 

In Theorem \ref{d=3} we prove that if $\overline{\e}_2(I)=\ov{\e}_1(I)-\ov{\e}_0(I)+\ell_R(R/\ov{I})+1$ and $\ov{\e}_3(I) = 0$, then $\ov{G}(I^\ell)$ is Cohen-Macaulay for all $\ell \geq 2.$  For this purpose we need the following proposition which is a consequence of Serre's formula and it seems to be well known. 
However, for the sake of completeness we give a proof of this. 

We set, for $\ell \in \Z$, ${\mathcal I}^{(\ell)}=\{I_{n\ell}\}_{n \in \Z}$, and ${\rm a}_i({\rm G}({\mathcal I})) = \max\{n \in \Z \ | \ [\H_{M}^i({\rm G}({\mathcal I})]_n \neq (0) \}$ for $i \in \Z$.

\begin{prop}\label{veronese}
Let $\ell > \max\{ {\rm a}_i({\rm G}({\mathcal I})) \, | \, 0 \leq i \leq d \}$ be an integer.
Then we have
$ \ell_R(R/I_{\ell(n+1)})=\sum_{i=0}^d(-1)^i \e_i({\mathcal I}^{(\ell)})\binom{n+d-i}{d-i} $
for all $n \geq 0$.
In particular, the equality
$ \ell_R(R/I_{\ell})=\sum_{i=0}^d(-1)^i \e_i({\mathcal I}^{(\ell)}) $
holds true for all $n \geq 0$.
\end{prop}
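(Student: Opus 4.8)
The plan is to derive everything from the Grothendieck--Serre formula \cite{Bla97} applied to the associated graded ring $G(\mathcal I)$, combined with a telescoping argument that passes from its graded pieces to the partial sums $\ell_R(R/I_{n+1})$. Write $P_G$ for the Hilbert polynomial of $G(\mathcal I)$ and set $a := \max\{{\rm a}_i(G(\mathcal I)) \mid 0 \le i \le d\}$. The first step is to record that Serre's formula
$$\ell_R([G(\mathcal I)]_m) - P_G(m) = \sum_{i=0}^d (-1)^i \ell_R([\H_M^i(G(\mathcal I))]_m)$$
has vanishing right-hand side whenever $m > a$, simply by the definition of the $a$-invariants; hence $\ell_R([G(\mathcal I)]_m) = P_G(m)$ for all $m > a$.

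The second step transfers this to the filtration itself. Let $P_{\mathcal I}$ denote the Hilbert polynomial of $\mathcal I$, so that $\ell_R(R/I_{m+1}) = P_{\mathcal I}(m)$ for $m \gg 0$; from $\ell_R(R/I_{m+1}) - \ell_R(R/I_m) = \ell_R([G(\mathcal I)]_m)$ one obtains the polynomial identity $P_{\mathcal I}(m) - P_{\mathcal I}(m-1) = P_G(m)$. Putting $\varphi(m) := P_{\mathcal I}(m) - \ell_R(R/I_{m+1})$, the two facts above give $\varphi(m) - \varphi(m-1) = P_G(m) - \ell_R([G(\mathcal I)]_m) = 0$ for every $m > a$. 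Thus $\varphi$ is constant on $\{m \ge a\}$, and since $\varphi(m) = 0$ for $m \gg 0$ this constant is $0$; that is, $\ell_R(R/I_{m+1}) = P_{\mathcal I}(m)$ already holds for all $m \ge a$.

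The third step specializes. For $\ell > a$ and any $n \ge 0$ we have $\ell(n+1) - 1 \ge \ell - 1 \ge a$, so the previous step yields $\ell_R(R/I_{\ell(n+1)}) = P_{\mathcal I}(\ell(n+1)-1)$. The right-hand side is a degree-$d$ polynomial in $n$ agreeing with $\ell_R(R/I_{\ell(n+1)}) = \ell_R(R/[\mathcal I^{(\ell)}]_{n+1})$ for $n \gg 0$, hence by uniqueness it is the Hilbert polynomial of $\mathcal I^{(\ell)}$; expanding it in the binomial basis gives precisely $\sum_{i=0}^d (-1)^i \e_i(\mathcal I^{(\ell)}) \binom{n+d-i}{d-i}$, valid now for all $n \ge 0$. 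Evaluating at $n = 0$, where each $\binom{d-i}{d-i} = 1$, recovers the ``in particular'' statement $\ell_R(R/I_{\ell}) = \sum_{i=0}^d (-1)^i \e_i(\mathcal I^{(\ell)})$.

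I expect the only genuinely delicate point to be the bookkeeping in the telescoping argument: one must propagate the vanishing of $\varphi$ downward from $+\infty$ to the sharp threshold $m = a$, and then observe that the hypothesis $\ell > a$ is exactly what guarantees $\ell(n+1)-1 \ge a$ at the boundary value $n=0$, which is where the conclusion ``for all $n \ge 0$'' (as opposed to merely $n \gg 0$) is won. The remaining ingredients---Serre's formula and the identification of $P_{\mathcal I}(\ell(n+1)-1)$ with the Veronese Hilbert polynomial---are standard.
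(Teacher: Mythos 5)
Your proof is correct, but it follows a genuinely different route from the paper's. Both arguments hinge on the Grothendieck--Serre formula, yet the paper applies it to the Veronese associated graded ring $G(\mathcal I^{(\ell)})$ rather than to $G(\mathcal I)$: it first invokes the bound ${\rm a}_i(G(\mathcal I^{(\ell)})) \le \lfloor {\rm a}_i(G(\mathcal I))/\ell \rfloor \le 0$ from \cite[Theorem 4.2]{HZ94} to kill all local cohomology of the Veronese in positive degrees, deduces $\ell_R([G(\mathcal I^{(\ell)})]_k)=\sum_{i=0}^{d-1}(-1)^i\e_i(\mathcal I^{(\ell)})\binom{k+d-i-1}{d-i-1}$ for $k\ge 1$, and then sums over $k=0,\dots,n$, absorbing the $k=0$ discrepancy into $(-1)^d\e_d(\mathcal I^{(\ell)})$. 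You instead apply Serre's formula to $G(\mathcal I)$ itself, telescope to show that the Hilbert function of $\mathcal I$ agrees with its Hilbert polynomial already for all $m\ge a$, and then reindex along the progression $m=\ell(n+1)-1$, identifying the resulting degree-$d$ polynomial in $n$ with the Hilbert polynomial of $\mathcal I^{(\ell)}$ by uniqueness. What your version buys is the complete avoidance of the Veronese $a$-invariant bound (a nontrivial external input), at the cost of an extra, but harmless, polynomial-identification step; what the paper's version buys is that the coefficients $\e_i(\mathcal I^{(\ell)})$ appear directly from Serre's formula for the Veronese ring. Your threshold bookkeeping --- that $\ell>a$ forces $\ell(n+1)-1\ge \ell-1\ge a$ for all $n\ge 0$, which is precisely where the hypothesis enters --- is exactly right.
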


\begin{proof}
We have ${\rm a}_i({\rm G}({\mathcal I}^{(\ell)})) \leq \lfloor {\rm a}_i({\rm G}({\mathcal I}))/\ell \rfloor \leq 0 $ by \cite[Theorem 4.2]{HZ94} where $\lfloor q \rfloor =\max\{n \in \Z \ | \ n \leq q \}$ for $q \in \mathbb{Q}$.
Then thanks to Serre's formula
%$$ \ell_R([{\rm G}({\mathcal I}^{(\ell)})]_0)-\sum_{i=0}^{d-1}(-1)^i {\rm e}_i({\mathcal I}^{(\ell)})=\sum_{i=0}^d (-1)^i\ell_R([\H_{M'}^i({\rm G}({\mathcal I}^{(\ell)}))]_0) $$and 
$ \ell_R([{\rm G}({\mathcal I^{(\ell)}})]_k)=\sum_{i=0}^{d-1}(-1)^i\e_i({\mathcal I^{(\ell)}})\binom{k+d-i-1}{d-i-1}$ for all $k \geq 1$.
Hence we have
\begin{eqnarray*}
&& \ell_R(R/I_{\ell (n+1)}) = \sum_{k=0}^{n}\ell_R([{\rm G}({\mathcal I^{(\ell)}})]_k )\\
&=& \sum_{k=0}^n\left\{ \sum_{i=0}^{d-1}(-1)^i\e_i({\mathcal I^{(\ell)}})\binom{k+d-i-1}{d-i-1}+ \ell_R([{\rm G}({\mathcal I^{(\ell)}})]_k)-\sum_{i=0}^{d-1}(-1)^i\e_i({\mathcal I^{(\ell)}})\binom{k+d-i-1}{d-i-1} \right\}\\
%&=& \sum_{k=0}^n\sum_{i=0}^{d-1}(-1)^i\e_i({\mathcal I^{(\ell)}})\binom{k+d-i-1}{d-i-1}+\sum_{k=0}^n\left\{ \ell_R([{\rm G}({\mathcal I^{(\ell)}})]_k)-\sum_{i=0}^{d-1}(-1)^i\e_i({\mathcal I^{(\ell)}})\binom{k+d-i-1}{d-i-1} \right\}\\
&=& \sum_{i=0}^{d-1}(-1)^i\e_i({\mathcal I^{(\ell)}})\binom{n+d-i}{d-i}+\left\{ \ell_R([{\rm G}({\mathcal I^{(\ell)}})]_0)-\sum_{i=0}^{d-1}(-1)^i\e_i({\mathcal I^{(\ell)}}) \right\}
\end{eqnarray*}
for all $n \geq 0$.
Therefore, $(-1)^d{\e_d}({\mathcal I}^{(\ell)})=\ell_R([{\rm G}({\mathcal I^{(\ell)}})]_0)-\sum_{i=0}^{d-1}(-1)^i\e_i({\mathcal I^{(\ell)}})$ and the equality $ \ell_R(R/I_{\ell(n+1)})=\sum_{i=0}^d(-1)^i \e_i({\mathcal I}^{(\ell)})\binom{n+d-i}{d-i} $ holds true for all $n \geq 0$.
\end{proof}

As a consequence of Proposition \ref{veronese} we obtain a result of Rees \cite[Theorem 2.6]{Ree81} (see also  \cite[Theorem 4.5]{H87}) in dimension two which states that: $\ov{\e}_2(I)=0$ if and only if $\ov{\e}_1(I^\ell)=\e_0(I^{\ell})-\ell_R(R/\ov{I^{\ell}})$ for  all $\ell \geq 1$. In particular, by \cite[Theorem 2.9]{RV10} $\ov{\rm G}(I^\ell)$ is Cohen-Macaulay for all $\ell \geq 1$. 
Analogously we obtain the following result on vanishing of $\ov{\e}_3(I)$ in dimension three as a consequence of Proposition \ref{veronese}.
Notice that next result for normal ideals can be also obtained as a consequence of [12, Corollary 5.3.]

\begin{cor}\label{cor3} 
Let $d=3$, 
then the following conditions are equivalent.
\begin{itemize}
\item[$(1)$] $\overline{\e}_3(I)=0$, 
\item[$(2)$] $\overline{\e}_1(I^{\ell})=2\overline{\e}_0(I^{\ell})+\ell_R(R/\overline{I^{\ell}})-\ell_R(\overline{I^{\ell}}/\overline{I^{2\ell}})$ for some (equiv. all) $\ell  >  \max\{ {\rma}_i(\overline{\rmG}(I)) \ | \ 1 \leq i \leq 3 \}$, and
\item[$(3)$] $\overline{\e}_2(I^{\ell})=\overline{\e}_1(I^{\ell})-\overline{\e}_0(I^{\ell})+\ell_R(R/\overline{I^{\ell}})$ for some (equiv. all) $\ell  >  \max\{ {\rma}_i(\overline{\rmG}(I)) \, | \, 1 \leq i \leq 3 \}$.
\end{itemize}
In particular, $\overline{\rm G}(I^{\ell})$ is Cohen-Macaulay  for all $\ell >  \max\{ {\rma}_i(\overline{\rmG}(I)) \ | \ 1 \leq i \leq 3 \}$ if any of the above conditions are satisfied. 
\end{cor}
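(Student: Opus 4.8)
The plan is to derive all three equivalences from Proposition \ref{veronese} applied to the normal filtration $\mathcal{I}=\{\ov{I^n}\}_{n\in\Z}$, whose $\ell$-th Veronese $\mathcal{I}^{(\ell)}=\{\ov{I^{n\ell}}\}_{n\in\Z}$ is precisely the normal filtration of $I^\ell$; thus $\e_i(\mathcal{I}^{(\ell)})=\ov{\e}_i(I^\ell)$. First I would observe that $\ov{G}(I)$ has positive depth (the element $a_1t$ is $\ov{G}(I)$-regular), so $\H^0_{M'}(\ov{G}(I))=(0)$ and $\mathrm{a}_0(\ov{G}(I))=-\infty$; hence the range $1\le i\le 3$ in the corollary agrees with the range $0\le i\le d$ required by Proposition \ref{veronese}. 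For every $\ell$ in the stated range, Proposition \ref{veronese} then guarantees that the normal Hilbert function of $I^\ell$ coincides with its Hilbert polynomial for all $n\ge 0$. In particular the two length data that actually enter conditions $(2)$ and $(3)$ are exact:
\[
\ell_R(R/\ov{I^\ell})=\ov{\e}_0(I^\ell)-\ov{\e}_1(I^\ell)+\ov{\e}_2(I^\ell)-\ov{\e}_3(I^\ell),
\]
\[
\ell_R(R/\ov{I^{2\ell}})=4\ov{\e}_0(I^\ell)-3\ov{\e}_1(I^\ell)+2\ov{\e}_2(I^\ell)-\ov{\e}_3(I^\ell).
\]

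The key step is the identity $\ov{\e}_3(I^\ell)=\ov{\e}_3(I)$. Writing $P(m)=\sum_{i=0}^{3}(-1)^i\ov{\e}_i(I)\binom{m+3-i}{3-i}$ for the normal Hilbert polynomial of $I$, I would set $Q(n):=P(\ell(n+1)-1)$ and note that $Q(n)$ and the Hilbert polynomial of $I^\ell$ both compute $\ell_R(R/\ov{I^{\ell(n+1)}})$ for $n\gg 0$; being two polynomials of degree $3$ agreeing for large $n$, they are identically equal. Evaluating the resulting polynomial identity $\sum_{i=0}^{3}(-1)^i\ov{\e}_i(I^\ell)\binom{n+3-i}{3-i}=P(\ell(n+1)-1)$ at $n=-1$ kills every binomial on the left except the constant term and yields $-\ov{\e}_3(I^\ell)=P(-1)=-\ov{\e}_3(I)$. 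Thus $\ov{\e}_3(I^\ell)=\ov{\e}_3(I)$ (in fact for every $\ell\ge 1$).

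With this in hand I would collapse both $(2)$ and $(3)$ to the single condition $\ov{\e}_3(I^\ell)=0$. Subtracting the two displayed relations gives $\ell_R(\ov{I^\ell}/\ov{I^{2\ell}})=3\ov{\e}_0(I^\ell)-2\ov{\e}_1(I^\ell)+\ov{\e}_2(I^\ell)$; substituting this together with the first relation into the right-hand side of $(2)$ makes it collapse to $\ov{\e}_1(I^\ell)-\ov{\e}_3(I^\ell)$, so $(2)$ holds iff $\ov{\e}_3(I^\ell)=0$. Likewise the first relation gives $\ov{\e}_2(I^\ell)-\ov{\e}_1(I^\ell)+\ov{\e}_0(I^\ell)-\ell_R(R/\ov{I^\ell})=\ov{\e}_3(I^\ell)$, so $(3)$ holds iff $\ov{\e}_3(I^\ell)=0$. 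Combining with $\ov{\e}_3(I^\ell)=\ov{\e}_3(I)$, all three conditions are equivalent to $\ov{\e}_3(I)=0$; since the argument is uniform in $\ell$, this simultaneously settles the ``some (equiv. all)'' clauses. For the final assertion, condition $(3)$ is exactly the equality $\ov{\e}_2(I^\ell)=\ov{g}_s(I^\ell)$ in the first of Itoh's inequalities recalled in the introduction, and equality there forces the normal reduction number of $I^\ell$ to be at most $2$ and hence $\ov{G}(I^\ell)$ to be Cohen-Macaulay, for every $\ell$ in the range.

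The computation itself is routine once Proposition \ref{veronese} is available, so the only real point to be careful about is the distinction between the Hilbert polynomial and the length function: the evaluation at $n=-1$ is a purely formal identity of polynomials (note that $\ell_R(R/\ov{I^{m+1}})$ genuinely vanishes at $m=-1$, whereas $P(-1)=-\ov{\e}_3(I)$ need not), and the exactness of the length data at the small arguments $n=0,1$ feeding into $(2)$ and $(3)$ is exactly what the hypothesis $\ell>\max\{\mathrm{a}_i(\ov{G}(I))\}$ buys through Proposition \ref{veronese}. I expect this bookkeeping --- keeping the formal polynomial manipulations separate from the genuine length equalities --- to be the main thing to get right.
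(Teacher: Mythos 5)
Your proof is correct and follows exactly the route the paper intends: the paper obtains Corollary \ref{cor3} directly from Proposition \ref{veronese} (evaluating the normal Hilbert polynomial of $I^{\ell}$ at $n=0,1$, together with the invariance $\ov{\e}_3(I^{\ell})=\ov{\e}_3(I)$), and deduces the Cohen--Macaulayness of $\ov{\rm G}(I^{\ell})$ from Itoh's Theorem 2(2), just as you do. You have merely written out in full the bookkeeping that the paper leaves implicit.
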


The last assertion of Corollary \ref{cor3} is a consequence of \cite[Theorem 2(2)]{Itoh92}. Now we are ready to prove the main theorem of this section.

\begin{thm}\label{d=3} 
%Let $(R,\m)$ be an analytically unramified Cohen-Macaulay local ring of dimension three and let $I$ be an $\m$-primary ideal in $R$.
Suppose $d=3$.
Then the following conditions are equivalent.
\begin{itemize}
\item[$(1)$] $\overline{\e}_2(I)=\ov{\e}_1(I)-\ov{\e}_0(I)+\ell_R(R/\ov{I})+1$ and $\ov{\e}_3(I) = 0$;
\item[$(2)$] there exists an exact sequence
$ 0 \to B(-3) \to B(-2)^ {\oplus 3} \to \ov{C} \to 0$
of graded $T$-modules;
\item[$(3)$] there exists an exact sequence
$0 \to B(-2)^{\oplus2} \to \ov{C} \to (B/(a_1t))(-2) \to 0$
of graded $T$-modules.
\end{itemize}
When this is the case, the following assertions hold true:
\begin{itemize}
\item[$(i)$] $\m \overline{C}=(0)$ and $\rank_B\overline{C}=2$, and $\depth_T \ov{C} =2$,
\item[$(ii)$] $\m \ov{I^3} \subseteq J\ov{I^2}$, $\ell_R(\ov{I^3}/J\ov{I^2})=3$, $\ov{I^{n+1}}=J\ov{I^n}$ for all $n \geq 3$,
\item[$(iii)$] $\overline{\e}_1(I)=\ov{\e}_0(I)-\ell_R(R/\ov{I})+\ell_R(\ov{I^2}/J\ov{I})+2$ and $\ov{\e}_2(I)=\ell_R(\ov{I^2}/J\ov{I})+3$,
\item[$(iv)$] ${\mathrm{HS}}_{\ov{\rm G}(I)}(z)=\frac{\ell_R(R/\ov{I})+\{\ov{\e}_0(I)-\ell_R(R/\ov{I}) -\ell_R(\ov{I^2}/J\ov{I})\}z+\{\ell_R(\ov{I^2}/J\ov{I})-3\}z^2+4z^3-z^4}{(1-z)^3}$,
\item[$(v)$] $\depth ~\ov{\rm G}(I)=1$, and $\H_{M'}^1(\ov{\rm G}(I))=[\H_{M'}^1(\ov{\rm G}(I))]_0$, $\ell_R([\H_{M'}^1(\ov{\rm G}(I))]_0)=1$, ${\rm a}_2(\ov{\rm G}(I))=1$, and ${\rm a}_3(\ov{\rm G}(I)) \leq -1$,
%\item[$(vii)$] $[\H_{M}^2(\ov{C})]_0 \cong R/\m$, $ [\H_{M}^2(\ov{C})]_n=(0) $ for all $n \geq 1$,and $[\H_{M}^3(\ov{C})]_n =(0)$ for all $n \geq 0$, and
\item[$(vi)$] $\ov{\rm G}(I^{\ell})$ is Cohen-Macaulay for all $\ell \geq 2$.
\end{itemize}
\end{thm}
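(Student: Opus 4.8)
The plan is to descend to the two-dimensional quotient $S=R/(a_1)$, where the almost-minimal case is already settled, and then to control the single obstruction measured by $\sum_{n\geq 2}\ell_R(\ov{I^nS}/\ov{I^n}S)$. Assume (1). Going modulo $a_1$ and invoking \cite[Theorem 1]{Itoh92} shows that $IS$ inherits the hypothesis $\ov{\e}_2(IS)=\ov{\e}_1(IS)-\ov{\e}_0(IS)+\ell_R(S/\ov{IS})+1$; since $\dim S=2$, the two-dimensional instance of Theorem \ref{key} (equivalently Theorem \ref{maintheorem1}) gives $\ov{I^{n+1}S}=JS\,\ov{I^nS}$ for $n\geq 3$ and $\ov{C}_{JS}(IS)\cong (T_S/\m T_S)(-2)$. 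Exactly as in the case $d=3$ of the proof of Theorem \ref{key}, the resulting reduction-number bound together with \cite[Proposition 3.2]{HZ94} forces $[\H_{N'}^2(\ov{\mathcal R'}(IS))]_n=(0)$ for $n\geq 2$, so Lemma \ref{rem}(2) collapses to $\ov{\e}_3(I)=1-\sum_{n\geq 2}\ell_R(\ov{I^nS}/\ov{I^n}S)$. Hence, under (1), the vanishing $\ov{\e}_3(I)=0$ is equivalent to the single constraint $\sum_{n\geq 2}\ell_R(\ov{I^nS}/\ov{I^n}S)=1$.

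Next I would turn this defect into the depth and rank of $\ov{C}$. By Lemma \ref{cohomology}(1) the defect equals $\ell_R(\H_{M'}^1(\mathcal R'(\{\ov{I^n}S\})))$, so this module has length one. Consequently $\ov{C}$ cannot be Cohen-Macaulay: otherwise Proposition \ref{C_2}(5) would yield $\depth~\ov{G}(I)\geq 2$, hence $\H_{M'}^2(\ov{\mathcal R'}(I))=(0)$ by Theorem \ref{Itoh2}, and then the local cohomology sequence induced by multiplication by $a_1t$ (as in the proof of Proposition \ref{sally}) would give $\H_{M'}^1(\mathcal R'(\{\ov{I^n}S\}))=(0)$, i.e. zero defect, a contradiction. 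Since $\dim_T\ov{C}=3$ and $\ov{C}$ satisfies Serre's $(S_2)$ over $T/\Ann_T\ov{C}$ by Proposition \ref{S2}, we conclude $\depth_T\ov{C}=2$, whence $\depth~\ov{G}(I)=1$ by Proposition \ref{C_2}(5). Because $\Ass_T\ov{C}=\{\m T\}$ and $a_1t\notin\m T$, the element $a_1t$ is $\ov{C}$-regular, so $\mathrm{HS}_{\ov{C}/a_1t\ov{C}}(z)=(1-z)\,\mathrm{HS}_{\ov{C}}(z)$; comparing $\ov{C}/a_1t\ov{C}$ with $\ov{C}_{JS}(IS)\cong (B/(a_1t))(-2)$ up to the length-one defect fixes $\mathrm{HS}_{\ov{C}}(z)=(3z^2-z^3)/(1-z)^3$ and, together with $(S_2)$, shows $\m\ov{C}=(0)$, $\rank_B\ov{C}=2$ and $\ell_R([\ov{C}]_2)=3$. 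A minimal $B$-presentation of $\ov{C}$ generated in degree $2$ then yields the exact sequences of (2) and (3), with the quotient $(B/(a_1t))(-2)$ identified with $\ov{C}_{JS}(IS)$.

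For the converse directions and the numerical consequences I would reverse the bookkeeping. Either exact sequence forces $\mathrm{HS}_{\ov{C}}(z)=(3z^2-z^3)/(1-z)^3$; feeding this into Proposition \ref{C_2}(4) gives the Hilbert series (iv), whose coefficients yield (iii) and in particular $\ov{\e}_2(I)=\ell_R(\ov{I^2}/J\ov{I})+3$ together with $\ov{\e}_3(I)=0$, which with the inequalities of \cite[Theorem 2]{Itoh92} return (1); the equivalence $(2)\Leftrightarrow(3)$ is the module-theoretic comparison of the two presentations of the same $\ov{C}$. Assertion (i) is read off the sequences, and (ii) follows from $\m\ov{C}=(0)$, $\ell_R([\ov{C}]_2)=3$ and $\ov{C}=[\ov{C}]_2T$ via Lemma \ref{fact}. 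For (v), the local cohomology sequence induced by $a_1t$ together with the vanishing in Theorem \ref{Itoh2} and the length-one module $\H_{M'}^1(\mathcal R'(\{\ov{I^n}S\}))$ pins down $\H_{M'}^1(\ov{G}(I))=[\H_{M'}^1(\ov{G}(I))]_0$ of length one, ${\rm a}_2(\ov{G}(I))=1$ and ${\rm a}_3(\ov{G}(I))\leq -1$. Finally (vi) follows from (v) and Corollary \ref{cor3}: since $\max\{{\rm a}_i(\ov{G}(I))\mid 1\leq i\leq 3\}=1$, the condition $\ov{\e}_3(I)=0$ makes $\ov{G}(I^\ell)$ Cohen-Macaulay for every $\ell>1$, i.e. for all $\ell\geq 2$.

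I expect the main obstacle to be the middle paragraph, namely upgrading the purely numerical identity $\sum_{n\geq 2}\ell_R(\ov{I^nS}/\ov{I^n}S)=1$ to the rigid module structure in (2) and (3). The difficulty is precisely the phenomenon highlighted before Lemma \ref{cohomology}: since $\ov{I^nS}\neq\ov{I^n}S$, the module $\ov{C}$ is not simply recovered from its reduction modulo $a_1t$, and in fact $\rank_B\ov{C}=2$ jumps above the rank $1$ of the two-dimensional Sally module $\ov{C}_{JS}(IS)$. Locating the single length-one defect and showing it is compatible with $\ov{C}$ being a rank-two $(S_2)$-module over $B$ with three degree-two generators and one linear syzygy --- rather than the essentially formal Hilbert-series manipulations --- is where the real work lies.
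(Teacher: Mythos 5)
Your overall scaffolding matches the paper's: reduce to $S=R/(a_1)$, apply the two-dimensional case of Theorem \ref{maintheorem1} to $IS$, use \cite[Proposition 3.2]{HZ94} to kill $[\H^2_{N'}(\ov{\mathcal R'}(IS))]_{\geq 2}$ so that Lemma \ref{rem}(2) gives $\ov\e_3(I)=1-\sum_{n\geq 2}\ell_R(\ov{I^nS}/\ov{I^n}S)$, and your argument that $\ov C$ is not Cohen--Macaulay (hence $\depth_T\ov C=2$ via $(S_2)$ and $\depth\ov G(I)=1$) is sound and is essentially how the paper gets $(v)$ started. The converse directions $(2)\Rightarrow(1)$ and the derivation of $(i)$--$(iv)$, $(vi)$ from the exact sequences are also the paper's route.

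However, there is a genuine gap exactly where you flag "the real work": your middle paragraph does not actually produce the structure in $(2)$ and $(3)$. First, $\m\ov C=(0)$ cannot be read off from the Hilbert series together with $(S_2)$; the paper only obtains it \emph{after} the exact sequence $(3)$ is in hand (from $a_1t\,\ov C\subseteq B(-2)^{\oplus 2}$ and regularity of $a_1t$), and in the direction $(1)\Rightarrow(3)$ it is proved by exhibiting explicit generators. Second, the comparison of $\ov C/a_1t\ov C$ with $\ov C_{JS}(IS)\cong B'(-2)$ "up to the length-one defect" is not a computation one can perform abstractly: the two modules have different $B$-ranks ($2$ versus $1$), and the discrepancy is accounted for only by the paper's key construction, which you are missing. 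Namely: Claim 1 must locate the defect in degree $2$ (if $\ov{I^2S}=\ov{I^2}S$ then equality $(*)$ forces $\ov\e_3(I)=1$, a contradiction), one then picks $y$ with $\ov{I^2S}=yS+\ov{I^2}S$ and $\m y\subseteq\ov{I^2}S$, lifts $a_2y,a_3y$ to $y_2,y_3\in\ov{I^3}$, and sets $C'=(\ov{y_2t^2},\ov{y_3t^2})T$. The rank-two free part of $\ov C$ is precisely $C'$ (which dies in $\ov C_{JS}(IS)$), and proving $C'\cong B(-2)^{\oplus 2}$ requires both the delicate injectivity argument of Claim 2 (the colon computation $(a_3')\cap\ov{I^2S}=(a_3')\ov IS$) and the $(S_2)$ property of $\ov C$ to exclude $\ell_{T_{\p}}(C'_{\p})=1$. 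Without this decomposition $0\to C'\to\ov C\to\ov C/C'\to 0$ and the identification $\ov C/C'\cong(B/a_1tB)(-2)$ via the map to $\ov C_{JS}(IS)$, neither $\m\ov C=(0)$, nor the degree-two generation of $\ov C$ (which also needs the lifting $\ov{I^{n+1}}=J\ov{I^n}+(a_1)\cap\ov{I^{n+1}}=J\ov{I^n}$ for $n\geq 3$), nor the exact sequences themselves are established.
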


% Let us divide our proof of Theorem \ref{d=3} in two steps.

\begin{proof}: First we prove $(1) \Rightarrow (3) \Rightarrow (2) \Rightarrow (1)$. \\
 $(1) \Rightarrow (3):$ Recall that by \cite[Theorem 1]{Itoh92} (see also \cite{HU14}) we can choose $a_1 \in I $ such that $\ov{I(R/(a_1))}=\ov{I}(R/(a_1))$, and $\ov{I^{n}(R/(a_1))}=\ov{I^n}(R/(a_1))$ for all $n \gg 0.$ Let $J$ be  a minimal reduction of $I$ such that $J=(a_1,a_2,a_3).$ Let $S=R/(a_1).$ 
We have $\ov{\e}_2(IS)=\ov{\e}_1(IS)-\ov{\e}_0(IS)+\ell_R(S/\ov{IS})+1$ by \cite[Theorem 1]{Itoh92}.
Therefore we have 
\begin{eqnarray*}
\ov{\e}_1(I)&=&\ov{\e}_1(IS) \hspace*{6.6cm}\mbox{[by \cite[Theorem 1]{Itoh92}]}\\
&=&\ov{\e}_0(IS)-\ell_{S}(S/\ov{IS})+\ell_{S}(\ov{I^2S}/J\ov{IS})+1 \hspace*{1cm} \mbox{[by Theorem \ref{maintheorem1}]}\\
&=& \ov{\e}_0(IS)-\ell_{S}(S/\ov{I}S)+\ell_{S}(\ov{I^2S}/\ov{I^2}S)+\ell_{S}(\ov{I^2}S/J\ov{I}S)+1   \\
&=& \ov{\e}_0(I)-\ell_{R}(R/\ov{I})+\ell_{R}(\ov{I^2}/J\ov{I})+\ell_{R}(\ov{I^2S}/\ov{I^2}S)+1 \hspace*{0.5cm} \ \ \ (*). 
\end{eqnarray*}
The last equality is true because $\ell_R(R/\ov{I})=\ell_{S}(S/\ov{I}S),$ and $\ell_R(\ov{I^2}/J\ov{I})=\ell_R(\ov{I^2}S/J\ov{I}S)$ $($notice that $a_1t$ is $\ov{G}(I)$-regular$)$.
We also have $\ell_R(\ov{I^3S}/J\ov{I^2 S})=1$ and $\ov{I^{n+1}S}=J\ov{I^nS}$ for all $n \geq 3$ by Theorem \ref{maintheorem1}.

\begin{claim}\label{claim1}
We have $\ell_R(\ov{I^2S}/\ov{I^2}S)=1$ and $\ov{I^nS}=\ov{I^n}S$ for all $n \geq 3$.
\end{claim}
\begin{proof}[Proof of Claim \ref{claim1}]
Assume that $\ov{I^2S}=\ov{I^2}S.$ Then we have
$\ov{\e}_1(I) = \ov{\e}_0(I)-\ell_R(R/\ov{I})+\ell_R(\ov{I^2}/J\ov{I})+1$
by the equality $(*)$.
Then, since $\ov{\e}_2(I)=\ell_R(\ov{I^2}/J\ov{I})+2,$ we have $\ov{\e}_3(I)=1$ by Theorem \ref{maintheorem1} which contradicts that $\ov{\e}_3(I)=0$.
Thus, we have $\ov{I^2S} \neq \ov{I^2}S$.

% By the same argument as in the proof of Theorem \ref{key} in the case $d=3$, we have the equality (\textcolor{red}{notice that we do not $\ov{\e}_3(I) \neq 0$ for this}) \textcolor{blue}
Since $\ov{\rm{r}}_{JS}(IS)=3$, we have $[\H_{N'}^2(\ov{G}(IS))]_n=(0)$ for all $n \geq 2$ by \cite[Proposition 3.2]{HZ94}. 
Therefore by using the long exact sequence of local cohomology modules 
\[
\cdots \longrightarrow \H^2_{N'}(\ov{\mathcal{R}'}(IS))_{n+1} \longrightarrow \H^2_{N'}(\ov{\mathcal{R}'}(IS))_{n} \longrightarrow \H_{N'}^2(\ov{G}(IS))_n \longrightarrow 0
\]
induced from
$$0 \longrightarrow \ov{\mathcal{R}'}(IS)(1) \overset{t^{-1}}{\longrightarrow} \ov{\mathcal{R}'}(IS) \longrightarrow \ov{\rm G}(IS) \longrightarrow 0$$
we obtain that $[\H_{N'}^2(\ov{\mathcal{R}'}(IS))]_n=(0)$ for all $n \geq 2.$ Therefore by Lemma \ref{rem}(2),
$\ov{\e}_3(I) = 1-\sum_{n \geq 2}\ell_R(\ov{I^nS}/\ov{I^n}S). $
Then because $\ov{\e}_3(I) =0$ by our assumption and $\ov{I^2S} \neq \ov{I^2}S$, we get  $\ell_R(\ov{I^2S}/\ov{I^2}S)=1$ and $\ov{I^nS}=\ov{I^n}S$ for all $n \geq 3$ as required.
\end{proof}

Since $\ell_R(\ov{I^2S}/\ov{I^2}S)=1$ by Claim \ref{claim1}, we choose $y \in R$ such that $\ov{I^2S}=yS+\ov{I^2}S \neq \ov{I^2}S$ and $\m S \cdot y \subseteq \ov{I^2}S$.
%We have $\ov{C}_{JR'}(IR') \cong B'(-2)$ as graded $T$-modules by the hypothesis of induction on $d$ where $B'={\mathrm R}(J')/\m {\mathrm R}(J') \cong R/\m[X_1,X_2,\ldots,X_{d-1}]$ is a polynomial ring with $d-1$ indeterminates over the field $R/\m$.
Recall that we have $\ell_R(\ov{I^3S}/J\ov{I^2S})=1$ and $\ov{I^{n+1}S}=J\ov{I^nS}$ for all $n \geq 3.$  
Since $\ov{I^3S}=\ov{I^3}S$, there exists $x \in \ov{I^3}$ such that $\ov{I^3S}=xS+J\ov{I^2S}$.
Then, because $\ov{C}_{JS}(IS) \cong B'(-2)$ by Theorem \ref{maintheorem1} where $B'={\mathcal R}(J S)/\m {\mathcal R}(J S)$, we have $\ov{C}_{JS}(IS) =\ov{xt^2}'B'$ where $\ov{xt^2}'$ denotes the image of $xt^2$ in   $\ov{C}_{JS}(IS)$.
Hence 
$$ \ov{I^3}S=\ov{I^3S}= xS+J\ov{I^2S}=(x,a_2 y,a_3 y)S+J\ov{I^2}S.$$
Because $\ov{I^3}S/J\ov{I^2}S$ is a homomorphic image of $\ov{I^3}/J\ov{I^2}$, there exist elements $y_2, y_3 \in \ov{I^3}$ which corresponds to $a_2y$, and $a_3y$ in $S$ respectively.
We then have the equality $\ov{I^3}S=(x,y_2,y_3)S+J\ov{I^2}S$ so that the equality
$$\ov{I^3}=(x,y_2,y_3)R+J\ov{I^2}+(a_1) \cap \ov{I^3}=(x,y_2,y_3)R+J\ov{I^2}$$
holds true.
We furthermore have, for all $n \geq 3$, $\ov{I^{n+1}}=J\ov{I^n}+(a_1) \cap \ov{I^{n+1}}=J\ov{I^n}$, because $\ov{I^{n+1}}S=\ov{I^{n+1}S}=J\ov{I^nS}=J\ov{I^n}S$ for all $n \geq 3$.
Therefore $\ov{C}=(\ov{xt^2},\ov{y_2t^2},\ov{y_3t^2})T$ where $\ov{xt^2}$, $\ov{y_2t^2}$, and $\ov{y_3t^2}$ denote the images of $xt^2$, $y_2t^2$, and $y_3t^2$ in $\ov{C}$, respectively.
Let $C'=(\ov{y_2t^2},\ov{y_3t^2})T$ be the graded submodule of $\ov{C}$.
Then for all $n \in \Z,$ we have
\[
 [C']_n = \begin{cases}
                   (0) & \mbox{ if } n \leq 1 \\
                   [(y_2,y_3)J^{n-2}+J^{n-1}\ov{I^2}]/J^{n-1}\ov{I^2} & \mbox{ if } n \geq 2.
                  \end{cases}
\]

\begin{claim}\label{claim2}
We have $C'_2 \cong (R/\m)^2$ so that $C'$ forms a graded $B$-module.
\end{claim}

\begin{proof}[Proof of Claim \ref{claim2}]
We have $C'_2 \cong [(y_2,y_3)+J\ov{I^2}]/J\ov{I^2}.$ 
For $u_2,u_3 \in R$ define a surjective map $ f: (R/\m)^2 \to C_2'$ as $f(\ov{u_2},\ov{u_3})=\ov{u_2y_2 + u_ 3y_3 }.$ Here $\ov{(.)}$ denotes the image of an element in respective quotient.   
It is enough to prove that the map $f$ is injective.

Notice that $f$ is well-defined. In fact, since $\m S \cdot y \subseteq \ov{I^2}S, $ we have $\m \cdot (y_2,y_3)S=\m \cdot (a_2 y,a_3 y)S \subseteq J\ov{I^2}S.$ Hence $\m \cdot (y_2,y_3) \subseteq J\ov{I^2}+(a_1) \cap \ov{I^3}=J\ov{I^2}$.

Suppose that $f(\ov{u_2},\ov{u_3})=0$ for $u_2,u_3 \in R.$ Then $u_2y_2+u_3y_3 \in J\ov{I^2}.$ 
Therefore ${u_2}'{y_2}'+{u_3}' {y_3}' = {u_2}'(a_2{y})'+{u_3}'(a_3{y})'\in J\ov{I^2}S$ where ${(.)}'$
% $\widetilde{y_3}$, and $\widetilde{y}$ 
denotes the image of an element in $S.$ 
Write $u_2'(a_2y)'+u_3'(a_3y)'=a_2'i_2'+a_3'i_3'$ with $i_2,i_3 \in \ov{I^2}$.
Then $a_2'((u_2 y)'-i_2')=a_3'(i_3'-(u_3y)')$ so that $(u_2y)'-i_2' \in (a_3') \cap \ov{I^2 S} = (a_3') \ov{I}S\subseteq \ov{I^2}S$.
Hence $(u_2y)' \in \ov{I^2}S$.
Since $y' \notin \ov{I^2}S$ we get that $u_2 \in \m$. 
By the same argument we get that $u_3 \in \m.$ Hence $f$ is injective.
Thus $f$ is an isomorphism which proves the claim.
% Thus we get a required isomorphism $(R/\m)^2 \cong [(y_2,y_3)+J\ov{I^2}]/J\ov{I^2}$.
\end{proof}

%For all $n \in \Z$, we have
%\[
 %[\ov{C}/C']_n = \begin{cases}
%                   (0) & \mbox{ if } n \leq 1 \\
%                   \ov{I^{n+1}}/[(y_2,y_3)J^{n-2}+J^{n-1}\ov{I^2}] & \mbox{ if } n \geq 2.
%                  \end{cases}
%\]
%Since $\ov{C}_{J S}(I S)$ is $B'$-module and $C'$ is a $B$-module, it follows that $\ov{C}/C'$ is a $B$-module. {\color{red}This is not clear and we don't need it, because we can directly prove that $\overline{C}/C'$ forms $B-$module by the following sentences.}
Since $\overline{C}$ is generated by the homogeneous elements of degree two, so is $\overline{C}/C'$.
We have $\m \overline{I^3} \subseteq (y_2,y_3)+J\ov{I^2}$ because $\m \overline{I^3S} \subseteq J\overline{I^2S}$ $($recall that $\ell_R(\ov{I^3S}/J\ov{I^2S})=1)$.
Then because $[\overline{C}/C']_2 \cong \ov{I^3}/[(y_2,y_3)+J\ov{I^2}]$ and $\ov{I^{3}}=(x)+[(y_2,y_3)+J\ov{I^2}]$, $\overline{C}/C'$ forms a graded $B$-module and $\ov{C}/C'=\ov{xt^2}'' B$ where $\ov{xt^2}''$ denotes the image of $xt^2$ in $\ov{C}/C'$.  

By Claim \ref{claim1} and the equality $(*)$ we have 
$\ov{\e}_1(I)= \ov{\e}_0(I)-\ell_{R}(R/\ov{I})+\ell_{R}(\ov{I^2}/J\ov{I})+2$
and hence  $\ell_{T_{\p}}(\ov{C}_{\p})=2$ by Proposition \ref{C_2}\eqref{C_2(3)}.
Consider the canonical exact sequence
$$ 0 \longrightarrow C' \longrightarrow \ov{C} \longrightarrow \ov{C}/C' \longrightarrow 0 \ \ \ (**)$$
of graded $T$-modules.
Then $\ell_{T_{\p}}(\ov{C}_{\p})=2$ implies that $1 \leq \ell_{T_{\p}}(C'_{\p}) \leq 2$.

Suppose that $\ell_{T_{\p}}(C'_{\p})=1.$ 
Then, because $\Ass_TC' \subseteq \Ass_T \ov{C}=\{\p\}$, $C'$ is a $B$-torsion free module of rank one.
Hence $C' \cong \a(m)$ as graded $B$-modules for some graded ideal $\a$ in $B$ and for some $m \in \Z$.
Since $C'$ is not $B$-free $($notice that $C'_n=(0)$ for all $n \leq 1$ and $\ell_R(C'_2)=2)$ and $B$ is a UFD, we may assume that $\height_B ~\a \geq 2$.
On the other hand we have $\ov{C}/C' \cong B(-2)$, because there is an epimorphism $B(-2) \to \ov{C}/C'$ of graded $B$-modules, $B$ is a domain, and $\dim_B\ov{C}/C'=\dim B$.
Let $P \in  \Ass_B (B/\a)$, then thanks to the exact sequence
$$ 0 \longrightarrow \a_P \longrightarrow B_P \to (B/\a)_P \longrightarrow 0 $$
we get $\depth_{B_P} C'_P=1$ because $C' \cong \a (m)$ and $\depth ~B_P \geq 2$.
Furthermore, the sequence
$$ 0 \longrightarrow C'_P \longrightarrow \ov{C}_P \longrightarrow (\ov{C}/C')_P \longrightarrow 0 $$
is exact.
Then, since $\ov{C}$ satisfies the Serre's property $(S_2)$ as $T/\Ann_T\ov{C}$-module by Corollary \ref{S2}, $\depth_{B_{P}}(\ov{C}/C')_P=0$. This gives a contradiction, because $\ov{C}/C' \cong B(-2)$ and $\height_B~ P \geq 2$.

Thus we get $\ell_{T_{\p}}(C'_{\p}) = 2$.
Then, since $C'=(\ov{y_2t^2}, \ov{y_3t^2})B$, the natural surjective map $B(-2) \oplus B(-2) \to C' \to 0$ of graded $T$-modules forms an isomorphism.
 {Therefore we have $\depth_T~ \ov{C}/ C' \geq 2$ by the exact sequence $(**)$ because the ring $B$ is $3$-dimensional Cohen-Macaulay and  $\depth_T \, \ov{C} \geq 2$} by Proposition \ref{C_2}\eqref{C_2(5)}.
Then for all $n \geq 0$, we have
\begin{eqnarray*}
\ell_R(\ov{C}_n) = 2\ell_R(B_{n-2})+\ell_R([\ov{C}/C']_n)
%&=& 2\binom{n}{2}+\ell_R([\ov{C}/C']_n)\\
= 2\binom{n+2}{2}-4\binom{n+1}{1}+2+\ell_R([\ov{C}/C']_n).
\end{eqnarray*}
Hence by Proposition \ref{C_2}\eqref{C_2(2)}, we get
$\ell_R(R/\ov{I^{n+1}}) =  \ov{\e}_0(I)\binom{n+3}{3}-\{\ov{\e}_0(I)-\ell_R(R/\ov{I})+\ell_R(\ov{I^2}/J\ov{I})+2\}\binom{n+2}{2} 
+ \{\ell_R(\ov{I^2}/J\ov{I})+4\}\binom{n+1}{1}-2-\ell_R([\ov{C}/C']_n).
$
On the other hand, since $\ov{\e}_1(I)=\ov{\e}_0(I)-\ell_R(R/\ov{I})+\ell_R(\ov{I^2}/J\ov{I})+2$, 
$\ov{\e}_2(I)=\ov{\e}_1(I)-\ov{\e}_0(I)+\ell_R(R/\ov{I})+1=\ell_R(\ov{I^2}/J\ov{I})+3,$
and $\ov{\e}_3(I) =0,$ we have $\ell_R([\ov{C}/C']_n)=\binom{n+1}{1}-2$ for all $n \gg 0$.
Hence $\ov{C}/C'$ is a $2$-dimensional Cohen-Macaulay $B$-module with multiplicity one.

In the rest of this proof, we show that $\overline{C}/C' \cong (B/a_1tB)(-2)$ as graded $T$-modules.
%\begin{proof}[Proof of Claim \ref{claim3}]

Let $\beta:R[t] \to S[t]$ be the natural $R$-algebra map defined by $\beta(t)=t.$
This induces the homomorphism 
% Then $\beta(\ov{\mathcal{R}}(I)_n)=\ov{\mathcal{R}}(IR')_n$ and $\beta(T_n)=\mathcal{R}(JR')_n$ for all $n \geq 0$, so that two surjective maps $\ov{\mathcal {R}}(I) \to \ov{\mathcal {R}}(IR')$ and $T \to {\mathcal{R}}(JR')$ induce the epimorphism 
$\psi:\overline{C} \to \ov{C}_{JS}(IS)$ of graded $T$-modules.
Since $\beta(y_2), \beta(y_3) \in J\ov{I^2S}$ and $C'=(\ov{y_2t^2}, \ov{y_3t^2})B$, $\psi$ in turn induces  the graded homomorphism $\ov{\psi}:\ov{C}/C' \to \ov{C}_{JS}(IS)$ of graded $T$-modules.
Let $\varphi:B(-2) \to \ov{C}/C'$ and $\varphi':(B/a_1tB)(-2) \to \ov{C}_{JS}(IS)$ denote homomorphisms of graded $B$-modules defined by $\varphi(1)=\overline{xt^2}''$ and $\varphi'(1)=\overline{xt^2}'$.
Consider the following commutative diagram
\[\begin{array}{ccc}
B(-2) & \overset{\varphi}{\longrightarrow} & \ov{C}/C' \\
\mapdown{i} & &\mapdown{\ov{\psi}}  \\
(B/a_1tB)(-2) & \overset{\varphi'}{\longrightarrow} & \ov{C}_{JS}(IS) 
\end{array}\]
of graded $B$-modules where $i:B \to B/a_1tB$ denotes the natural map.
Then, since $\varphi'$ is an isomorphism, we get $[(0):_B \ov{xt^2}''] \subseteq a_1t B$.

Since, $\ov{C}/C' \cong (B/[(0):_B \ov{xt^2}''])(-2)$ is a 2-dimensional Cohen-Macaulay $B$-module with multiplicity one, the natural surjective map $(B/[(0):_B \ov{xt^2}'']) \to (B/a_1tB)$ is an isomorphism. 
This completes the proof of the implication $(1) \Rightarrow (3)$ of Theorem \ref{d=3}.

$(3) \Rightarrow (2)$ and $(i)$:
Let us consider the exact sequence
$$  0 \to B(-2)^{\oplus2} \overset{\phi}{\to} \ov{C} \to (B/(a_1t))(-2) \to 0 $$
of graded $T$-modules.
Since $\overline{C}/\Im ~\phi \cong (B/a_1tB)(-2)$, we have $a_1t \overline{C} \subseteq \Im \phi \cong B(-2)^{\oplus 2}$.
Hence we have $a_1t \m \overline{C} =(0)$, so that $\m \overline{C}=(0)$ because $a_1t$ is a $\overline{C}$-regular element.
Thus, $\overline{C}$ forms a graded $B$-module.
We have
\begin{eqnarray*}
\ell_R(\overline{C}_n) = 2\ell_R(B_{n-2})+\ell_R([B/a_1tB]_{n-2})
= 2\binom{n}{2}+\binom{n-1}{1}
= 2\binom{n+2}{2}-3\binom{n+1}{1}
\end{eqnarray*}
for all $n \geq 1$.
Then since $\ell_R(\overline{C}_2)=3$ and $\overline{C}=\overline{C}_2 \cdot B$ by the above exact sequence, we have $\depth_B \overline{C}=2$ and hence the minimal $B$-free resolution
$$ 0 \to B(-m) \to B(-2)^ {\oplus 3} \to \ov{C} \to 0  $$
of $\overline{C}$ as graded $B$-module for some integer $m \geq 3$.
Then we have 
\begin{eqnarray*}
\ell_R(\overline{C}_n) 
%= 3\ell_R(B_{n-2})-\ell_R(B_{n-m}) 
= 3\binom{n}{2}-\binom{n-m+2}{2}
= 2\binom{n+2}{2}-\{6-m\}\binom{n+1}{1}+3-\binom{m}{2}
\end{eqnarray*}
for all $n \geq m-2$ so that $m=3$.

% \noindent
$(2) \Rightarrow (1)$, $(iii)$, and $(iv)$: 
We have $\ell_R(\ov{C}_n) = 2\binom{n+2}{2}-3\binom{n+1}{1}$ for $n \geq 1$ and $\depth_T\overline{C}=2$ by the exact sequence of our assertion $(2)$. Thus assertions $(1)$, $(iii)$, and $(iv)$ follow by Proposition \ref{C_2}\eqref{C_2(2)}.

$(ii)$: Since $\m \ov{C}=(0)$, $\ov{C}=\ov{C}_2 B$, $\ov{C}_2 \cong \ov{I^3}/J\ov{I^2}$, and $\ell_R(C_2)=3$, we have $\m \ov{I^3} \subseteq J\ov{I^2}$ and $\ov{I^{n+1}}=J\ov{I^n}$ for all $n \geq 3$  by Lemma \ref{fact}, and $\ell_R(\ov{I^3}/J\ov{I^2})=3$.

%$(vii)$: Let us consider the exact sequence
%$$ 0 \to [\H_{N'}^2(\ov{C})]_n \to [\H_{B_+}^3(B)]_{n-3} \to [\H_{B_+}^3(B)]_{n-2}^{\oplus 3} \to [\H_{N'}^3(\ov{C})]_n \to 0$$
%of local cohomology modules for $n \in \Z$ which is induced by the exact sequence of our assertion $(2)$. 
%Then, because $[\H_{B_+}^3(B)]_{-3} \cong R/\m$ and $[\H_{B_+}^3(B)]_{n}=(0)$ for $n \geq -2$, we have $[\H_{N'}^2(\ov{C})]_n=(0)$ for all $n > 0$, $[\H_{N'}^2(\ov{C})]_0 \cong R/\m$, and $[\H_{N'}^3(\ov{C})]_n=(0)$ for all $n \geq 0$.

$(v)$ and $(vi)$:
Since $\ov{\e}_3(I)=0,$ by \cite[Theorem 4.1, Lemma 4.7]{Bla97} we have $\H^3_{N'}(\ov{\mathcal{R}'}(I))_n=0$ for all $n \geq 0.$ 
We have $\ell_R(R/\overline{I^{n+1}})=\sum_{i=0}^3(-1)^i\overline{\e}_i(I)\binom{n+3-i}{3-i}$ for all $n \geq 1$ by Proposition \ref{C_2}(2) because $\ell_R(\ov{C}_n) = 2\binom{n+2}{2}-3\binom{n+1}{1}$ for all $n \geq 1$ as above.
Hence, by \cite[Theorem 4.1]{Bla97}, we have $[\H_{N'}^2(\ov{\mathcal{R}'}(I))]_n=(0)$ for all $n \geq 2$ so that $\H_{N'}^2(\ov{\mathcal{R}'}(I))=[\H_{N'}^2(\ov{\mathcal{R}'}(I))]_1$ by Theorem \ref{Itoh2}(3).
We also get $\ell_R([\H_{N'}^2(\ov{\mathcal{R}'}(I))]_1)=\sum_{i=0}^3(-1)^i\overline{\e}_i(I)-\ell_R(R/\overline{I})=1$ by \cite[Theorem 4.1]{Bla97} and our assumption. 

Now consider the short exact sequence
$$ 0 \longrightarrow \ov{\mathcal{R}'}(I)(1) \longrightarrow \ov{\mathcal{R}'}(I) \to \ov{G}(I) \to 0.$$
By using the induced long exact sequence of local cohomology modules 
\begin{eqnarray*}
 0 \to  \H^1_{N'}(\ov{G}(I))_n \to \H^2_{N'}(\ov{\mathcal{R}'}(I))_{n+1} \to \H^2_{N'}(\ov{\mathcal{R}'}(I))_{n} \to  \H^2_{N'}(\ov{G}(I))_n \\\to \H^3_{N'}(\ov{\mathcal{R}'}(I))_{n+1} \to \H^3_{N'}(\ov{\mathcal{R}'}(I))_{n} \to 
 \H^3_{N'}(\ov{G}(I))_n \to 0, 
\end{eqnarray*}
we get $(v)$. The assertion $(vi)$ follows by Corollary \ref{cor3}.
% \end{proof}
\end{proof}

\begin{rem} 
% (1) It would be interesting to find an example verifying the assumptions of Theorem \ref{d=3}.   In the Gorenstein case this would produce   a counterexample to Itoh's conjecture. Otherwise we hope that one of the equivalent conditions of Theorem \ref{d=3} will be useful to prove Itoh's conjecture at least in this case.
\noindent  We notice that Theorem \ref{d=3} can be extended to $d \ge 3$ under the assumption $\depth ~\ov G(I) \ge d-2.$ We omit the details because the techniques are standard. 
\end{rem}

\end{document}